\definecolor{qqqqcc}{rgb}{0.0,0.0,0.8}
\definecolor{qqqqff}{rgb}{0.0,0.0,1.0}
\definecolor{uuuuuu}{rgb}{0.26666666666666666,0.26666666666666666,0.26666666666666666}
\tikzset{>=latex}
\theoremstyle{plain}
\newtheorem{thm}{Theorem}
\newtheorem{prop}[thm]{Proposition}
\newtheorem{cor}[thm]{Corollary}
\newtheorem{lem}[thm]{Lemma}
\theoremstyle{definition}
\newtheorem{exa}[thm]{{\it Example}}
\theoremstyle{remark}
\newtheorem{rem}[thm]{{\it Remark}}
\newtheorem*{rem*}{{\it Remark}}
\DeclareMathOperator{\Alg}{{\mathbf{Alg}}}
\DeclareMathOperator{\dess}{{\mathsf{Des}}}
\DeclareMathOperator{\dom}{\mathcal{D}}
\DeclareMathOperator{\dzii}{{\mathsf{Chi}}}
\DeclareMathOperator{\jad}{\mathsf{ker}}
\DeclareMathOperator{\koo}{{\mathsf{root}}}
\DeclareMathOperator{\Lat}{{\mathbf{Lat}}}
\DeclareMathOperator{\paa}{{\mathsf{par}}}
\newcommand*{\card}[1]{\mathrm{card}(#1)}
\newcommand*{\dz}[1]{{\EuScript D}(#1)}
\newcommand*{\dzi}[1]{\dzii(#1)}
\newcommand*{\dzin}[2]{\dzii^{\langle#1\rangle}(#2)}
\newcommand*{\lambdab}{{\boldsymbol\lambda}}
\newcommand*{\mub}{{\boldsymbol\mu}}
\newcommand{\gphi}{\varGamma_{\hat\varphi}}
\newcommand{\gphiw}{\varGamma_{\hat\varphi_w}}
\newcommand*{\mphi}{M_{\hat\varphi}}
\newcommand*{\mphiw}{M_{\hat\varphi_w}}
\newcommand*{\pa}[1]{\paa(#1)}
\newcommand*{\pan}[2]{\paa^{#1}(#2)}
\newcommand*{\slam}{S_{\boldsymbol \lambda}}
\newcommand*{\smalloplus}{\raise0pt\hbox{$\scriptscriptstyle \oplus$}}
\newcommand*{\tcal}{{\mathscr T}}
\newcommand*{\gcal}{{\mathscr S}}
\newcommand*{\zbb}{\mathbb{Z}}
\newcommand*{\obr}[1]{\mathcal{R}(#1)}
\newcommand*{\nul}[1]{\mathcal{N}(#1)}
\newcommand*{\pth}{\mathscr{P}}
\newcommand*{\multi}{\mathcal{M}(\lambdab)}
\newcommand*{\multib}{\mathcal{M}(\lambdab)}
\newcommand*{\multio}{\mathscr{M}(\lambdab)}
\newcommand*{\multisk}{\mathcal{M}_0(\lambdab)}
\def\pp{\EuScript{P}}
\newcommand*{\alphab}{\mathbf{\alpha}}
\newcommand{\Le}{\leqslant}
\newcommand{\Ge}{\geqslant}
\def\is#1#2{\left\langle#1,#2\right\rangle}
\DeclareMathOperator{\D}{d\!}
\def\Dbb{\mathbb D}
\def\N{\mathbb N}
\def\Z{\mathbb{Z}}
\def\R{\mathbb{R}}
\def\C{\mathbb{C}}
\def\T{\mathbb{T}}
\def\ee{\EuScript E}
\def\hh{\EuScript H}
\def\kk{\EuScript K}
\def\mm{\EuScript M}
\def\bsb{{\mathbf B}}
\def\bsf{{\mathbf    F}}
\def\bst{{\mathbf T}}
\DeclareMathAlphabet{\mathpzc}{OT1}{pzc}{m}{it}
\title[Weighted shifts, multiplier algebras, reflexivity and decompositions]{Weighted shifts on directed trees. Their multiplier algebras, reflexivity and decompositions}
\author{P.\ Budzy{\'n}ski}
\author{P.\ Dymek}
\author{A.\ P{\l}aneta}
\author{M.\ Ptak}
\address{Katedra Zastosowa\'n Matematyki, Uniwersytet Rolniczy w Krakowie, ul. Balicka 253c, 30-198 Krak\'ow, Poland}
    \email{piotr.budzynski@ur.krakow.pl}
    \email{piotr.dymek@ur.krakow.pl}
    \email{artur.planeta@.ur.krakow.pl}
    \email{rmptak@cyf-kr.edu.pl}
\subjclass[2010]{Primary: 47B37; Secondary: 47L75}
\keywords{Weighted shift on directed tree, multiplication operator, reflexive algebras, Wold--type decomposition}
\begin{document}
\setstretch{1.2}

\begin{abstract}
We study bounded weighted shifts on directed trees. We show that the set of multiplication operators associated with an injective weighted shift on a rooted directed tree coincides with the WOT/SOT closure of the set of polynomials of the weighted shift. From this fact we deduce reflexivity of those weighted shifts on rooted directed trees whose all path-induced spectral-like radii are positive. We show that weighted shifts with positive weights on rooted directed trees admit a Wold-type decomposition. We prove that the pairwise orthogonality of the factors in the decomposition is equivalent to the weighted shift being balanced. 
\end{abstract}
\maketitle
\section{Introduction}
We study bounded weighted shifts on directed trees using analytic function theory approach, which was initiated in \cite{b-d-p-2015}. The class of weighted shifts on directed trees contains all classical weighed shifts (see \cite{j-j-s-2012-mams}) and it is related to that of weighted composition operators in $L^2$-spaces (see \cite{b-j-j-s-wco}). It was a source of interesting examples, problems and results (see e.g., \cite{b-d-j-s-2014-aaa, b-j-j-s-2014-jmaa, j-j-s-2012-jfa,j-j-s-2014-pams,p-2016-jmaa,t-2015-jmaa}). Our motivation for the study comes from a variety of results on the unilateral shift (see the monograph \cite{nik}) and on classical weighted shifts (see \cite{shi}) that were obtained with help of analytic functions. These elucidate the interplay between theories of analytic functions and operators.

An essential ingredient of our approach to the study of weighted shifts on directed trees is the notion of a multiplier algebra associated to a weighted shift on a directed tree (see \cite{b-d-p-2015}), which consists of coefficients of analytic functions. We prove that the set of multiplication operators with symbols belonging to the multiplier algebra corresponding to an injective weighted shift on a rooted directed tree is equal to the closure of polynomials of the shift in weak/strong operator topology (see Theorem \ref{wielomian}). Building on this, we prove that injective weighted shifts on rooted directed trees which behave well along the paths of the tree are reflexive (see Theorem \ref{refPB}). Both the results are well-known in the context of classical weighted shifts (see \cite{shi}. Recall that the reflexivity of the classical (unweighted) unilateral shift was proved in the  \cite{sa}; in turn, the reflexivity of some non--injective weighted shifts was shown in \cite{almp,kpmp}. Later in the paper, we solve two problems concerning multiplier algebras that were asked in \cite{b-d-p-2015} (see Examples \ref{mad} and \ref{offspring}).
In the final section, we turn our attention to the possibility of decomposing a weighted shift on a directed tree into the orthogonal sum of the restrictions of all the powers of the shift to the kernel of its adjoint. This can be done in case of a balanced injective weighted shift on a rooted directed tree (see Theorem \ref{studenciPB}). Without the assumption of the shift being balanced we get a weaker Wold-type decomposition (see Theorem \ref{studenciPB}). 

It is worth noting that the analytic aspects of the theory of weighted shifts on directed trees were studied also in \cite{c-t} and \cite{c-p-t}. The approach used therein was different than ours and relied on Shimurin's work (see \cite{s-2001-jram}) employing vector-valued analytic functions.

\section{Preliminaries}
Let $\N$, $\R$ and $\C$ denote the set of all natural numbers, real numbers and complex numbers, respectively. Set $\N_0=\N\cup\{0\}$ and $\R_+=[0,\infty)$. Denote by $\T$ the unit circle $\{z\in\C\colon |z|=1\}$ and by $\Dbb$ the open unit disc $\{z\in\C\colon |z|<1\}$. If $V$ is a set and $W\subseteq V$, then we write $W^c$ for $V\setminus W$. The characteristic function of $W$ is denoted by $\chi_{_{W}}$. For a set $Y$, $\card{Y}$ denotes the cardinal number of $Y$. Symbol $\C[X]$ stands for the set of all complex polynomials in one variable, whereas $\mathcal{T}$ denotes the set of trigonometric polynomials on $\T$.  In all what follows we use the convention that $\sum_{i\in \emptyset}x_i=0$.

Let $V$ be a nonempty set. Then $\ell^2(V)$ denotes the Hilbert space of all functions $f\colon V\to\C$ such that $\sum_{v\in V}|f(v)|^2<\infty$ with the inner product given by $\is{f}{g}=\sum_{v\in V} f(v)\overline{g(v)}$ for $f,g\in \ell^2(V)$. The norm induced by $\is{\cdot}{-}$ is denoted by $\| \cdot \|$. For $u \in V$, we define $e_u \in \ell^2(V)$ to be the characteristic function of the one-point set $\{u\}$; clearly, $\{e_u\}_{u\in V}$ is an orthogonal basis of $\ell^2(V)$. We will denote by $\ee$ the linear span of $\{e_u\}_{u\in V}$. Given a subset $W$ of $V$, $\ell^2(W)$ stands for the subspace of $\ell^2(V)$ composed of all functions $f$ such that $f(v)=0$ for all $W^c$, and $\ee_{W}$ denotes the set of all $f\in \ell^2(W)$ such that $\{v\in V\colon f(v)\neq 0\}$ is finite. By $P_{\kk}$ we denote the orthogonal projection from $\ell^2(V)$ onto its closed subspace $\kk$.

Let $\hh$ be a Hilbert space, $J$ be a nonempty set, and $\{X_j\}_{j\in J}\subseteq \hh$ be a family of sets. Then $\bigvee_{j \in J} X_j$ stands for the smallest closed linear subspace of $\hh$ such that $X_i \subset \bigvee_{j \in J} X_j$ for every $i \in J$. Throughout the paper, unless otherwise stated, every subspace of a Hilbert space is assumed to be closed.

Let $\hh$ be a (complex) Hilbert space. If $A$ is a (linear) operator in $\hh$, then $\dom(A)$, $\nul{A}$, $\obr{A},$ and $A^*$ denote the domain, the kernel, the range, and the adjoint of $A$, respectively (in case it exists). We write $\bsb(\hh)$ for the algebra of all bounded operators on $\hh$ equipped with the standard operator norm. By $\bsf_1(\hh)$ and $\bst(\hh)$ we denote the sets of rank one and trace class, respectively, operators on $\hh$. Let $\mathscr{W}$ be a subalgebra of $\bsb(\hh)$. Then the preannihilator  $\mathscr{W}_\perp$ of $\mathscr{W}$ is given by $\{T\in\bst(\hh)\colon \mathrm{tr}(AT)=0\text{ for all }A\in \mathscr{W}\}$. The set of all invariant subspaces of all operators $A\in\mathscr{W}$ is denoted by $\Lat \mathcal{W}$; recall that a (closed) subspace $\kk$ of $\hh$ is invariant for $A\in\bsb(\hh)$ if $A\kk\subset \kk$. Given a set $\mathscr{V}\subseteq\bst(\hh)$ we set $\mathscr{V}^\perp=\{A\in\bsb(\hh)\colon \mathrm{tr}(AT)=0\text{ for all } T\in \mathscr{V}\}$. If $\mathcal{M}$ is a family of subspaces of $\hh$, then we set $\Alg \mathcal{M} = \{ A \in \bsb(\hh)\colon A\mm\subset \mm \text{ for every } \mm \in \mathcal{M}\}$. The algebra $\mathscr{W}$ is said to be {\em reflexive} if $\Alg \Lat \mathscr{W} = \mathscr{W}$. Given $A\in\bsb(\hh)$, $\mathcal{W}(A)$ denotes the smallest algebra containing $A$ and the identity operator $I$ and closed in the weak operator topology; if $\mathscr{W}(A)$ is reflexive, then $A$ is said to be {\em reflexive}. Note that $\Lat A=\Lat \mathscr{W}(A)$.

Let $\tcal=(V,E)$ be a directed tree ($V$ and $E$ stand for the sets of vertices and directed edges of $\tcal$, respectively). Set $\dzi u= \{v\in V\colon (u,v)\in E\}$ for $u \in V$. Denote by $\paa$ the partial function from $V$ to $V$ which assigns to a vertex $u\in V$ its parent $\pa{u}$ (i.e.\ a unique $v \in V$ such that $(v,u)\in E$). For $k\in \N$, $\paa^k$ denotes the $k$-fold composition of the partial function $\paa$; $\paa^0$ denotes the identity map on $V$. A vertex $u \in V$ is called a {\em root} of $\tcal$ if $u$ has no parent. A root is unique (provided it exists); we denote it by $\koo$. The tree $\tcal$ is {\em rooted} if the root exists. The tree $\tcal$ is {\em leafless} if $\card{\dzi{v}}\Ge 1$ for every $v\in V$. 
Suppose $\tcal$ is rooted. We set $V^\circ=V\setminus \{\koo\}$. If $v\in V$, then $|v|$ denotes the unique $k\in\N_0$ such that $\paa^k(v)=\koo$. Given $n\in\N_0$, $\{|v|=n\}$ stands for the set $\{v\in V\colon |v|=n\}$. For given $u\in V$ and $n\in\N_0$ we set $\dess(u)=\{ v\in  V\colon \pan{k}{v}=u \text{ for some } k\in\N_0\}$ and $\dzin{n}{u}=\{v\in V\colon \pan{n}{v}=u\}$. A subgraph $\gcal$ of $\tcal$ which is a directed tree itself is called a {\em subtree} of $\tcal$. A {\em path} in $\tcal$ is a subtree $\pth=(V_\pth,E_\pth)$ of $\tcal$ which satisfies the following two conditions: (i) $\koo\in\pth$, (ii) for every $v\in V_\pth$, $\card{\mathsf{Chi}_{\pth}(v)}=1$. The collection of all paths in $\tcal$ is denoted by $\pp=\pp(\tcal)$. We refer the reader to \cite{j-j-s-2012-mams} for more on directed trees.

Weighted shifts on directed trees are defined as follows. Let $\tcal=(V,E)$ be a directed tree and let $\lambdab=\{\lambda_v\}_{v \in V^{\circ}} \subseteq \C$ be such that 
\begin{align*}
\sup_{v\in V} \sum_{u\in\dzii(v)}|\lambda_v|^2<\infty.
\end{align*} 
Then the following formula
\begin{align*}
(\slam f) (v) =
   \begin{cases}
\lambda_v \cdot f\big(\pa v\big) & \text{ if } v\in V^\circ,
   \\
0 & \text{ if } v=\koo,
   \end{cases}\quad f\in\ell^2(V),
\end{align*}
defines a bounded operator $\slam$ on $\ell^2(V)$ (see \cite[Proposition 3.1.8]{j-j-s-2012-mams}), which is called the {\em weighted shift on $\tcal$ with weights} $\lambdab$.  The reader is referred to \cite{j-j-s-2012-mams} for the foundations of the theory of weighted shifts on directed trees.

To avoid unnecessary repetitions we gather below a few  basic assumptions that will be used throughout the paper:
\begin{align} \label{stand1}\tag{$\dag$}
   \begin{minipage}{65ex}
$\tcal=(V,E)$ is a countably infinite rooted directed tree, and $\lambdab=\{\lambda_v\}_{v \in V^\circ} \subseteq  (0,\infty)$.
   \end{minipage}
   \end{align}
Recall that any weighted shift on a directed tree with non-zero weights is unitarily equivalent to a weighted shift with positive weights (see \cite[Theorem 3.2.1]{j-j-s-2012-mams}).

In our previous work we used a notion of a multiplier algebra induced by a weighted shift, which is defined via related multiplication operators. These are given as follows. Assume that
\begin{align} \label{stand2}\tag{$\star$}
   \begin{minipage}{65ex}
$\tcal=(V,E)$ is a countably infinite rooted and leafless directed tree,  and $\lambdab=\{\lambda_v\}_{v \in V^\circ}\subseteq  (0,\infty)$.
   \end{minipage}
   \end{align}
Given $u\in V$ and $v\in\dess(u)$ we set 
\begin{align*}
\lambda_{u|v}=\begin{cases}
 1 & \text{ if } u=v,\\
 \prod_{n=0}^{k-1} \lambda_{\pan{n}{v}} & \text{ if } \pan{k}{v}=u.\end{cases}    
\end{align*}   
Now, let $\hat\varphi \colon \N_0 \to \C$. Define the mapping $\varGamma_{\hat\varphi}^\lambdab\colon \C^V\to \C^V$ by
\begin{align}\label{multi1}
\big(\varGamma_{\hat\varphi}^\lambdab f\big)(v) =\sum_{k=0}^{|v|} \lambda_{\paa^k(v)|v} \, \hat\varphi (k) f(\paa^k(v)),\quad v\in V.
\end{align}
The {\em multiplication operator} $M_{\hat\varphi}^{\lambdab}\colon \ell^2(V)\supseteq \dz{M_{\hat\varphi}^{\lambdab}}\to \ell^2(V)$  is given by
   \begin{align}\label{multi2}
   \begin{aligned}
\dz{M_{\hat\varphi}^{\lambdab}} & = \big\{f \in \ell^2(V) \colon \varGamma_{\hat\varphi}^\lambdab f \in \ell^2(V)\big\},
   \\
M_{\hat\varphi}^{\lambdab} f & = \varGamma_{\hat\varphi}^\lambdab f, \quad f \in \dz{M_{\hat\varphi}^{\lambdab}}.
\end{aligned}
\end{align}
We call $\hat\varphi \colon \N_0\to \C$ the {\em symbol} of $M_{\hat\varphi}^{\lambdab}$. If no confusion can arise, we write $\varGamma_{\hat\varphi}$ and $M_{\hat\varphi}$ instead of $\varGamma_{\hat\varphi}^\lambdab$ and $M_{\hat\varphi}^{\lambdab}$, respectively. As shown in \cite[Lemma 4.1]{b-d-p-2015}, any multiplication operator $\mphi$ is automatically closed. Thus, if $\dz{\mphi}=\ell^2(V)$, then $\mphi\in\bsb(\ell^2(V))$. It is easily seen that for $u\in V$ such that $e_u\in\dz{\mphi}$ we have
\begin{align}\label{dziobak}
(\mphi e_u)(v)=\sum_{k=0}^\infty \hat\varphi(k)\slam^k e_u(v)=\left\{\begin{array}{cl}
\lambda_{u|v} \hat\varphi(n) & \text{ if } v\in\dzin{n}{u},\ n\in\N_0\\
0 & \text{ otherwise}.
\end{array}\right.,\quad v\in V
\end{align}
and
\begin{align}\label{Dziobak}
\mphi e_u=\sum_{v\in\dess(u)} \lambda_{u|v}\hat{\varphi}(|v|-|u|)e_v.
\end{align}
By $\multi$ we denote the {\em multplier algebra induced by $\slam$}, i.e., the commutative Banach algebra consisting of  all $\hat\varphi\colon \N_0\to \C$ such that $\dz{\mphi}= \ell^2(V)$ with the norm
\begin{align*}
\|\hat\varphi\|:=\|\mphi\|,\quad \hat\varphi\in \multi.
\end{align*}
Throughout the paper $\multisk$ stands for the set of all functions from $\multi$ having finite supports. The set $\big\{M_{\hat\varphi}\colon \hat\varphi\in\multi\big\}$ will be denoted by $\multio$. I is worth noting that if $\|\slam\| = r(\slam)$, then the algebra $\multi$ consists of all sequences that come as coefficients of bounded analytic functions on the disc $\{z \in \C \colon |z|< \|\slam\|\}$ (this follows from \cite[Propositions 4.6 \& 4.7]{b-d-p-2015}; cf. \cite[Corollary, p. 76]{shi}). For more information on $\multi$ we refer the reader to \cite{b-d-p-2015}.


The following well-known lemma will be used later in the paper. We use the notation: given a subspace $\kk$ of a Hilbert space $\hh$, $P_\kk$ denotes the orthogonal projection from $\hh$ onto $\kk$, while $\kk^\perp$ stands for the orthogonal complement of $\kk$ in $\hh$.
\begin{lem} \label{prosty}
Let $\hh$ be a Hilbert space and let $\mm$ and $\kk$ be its closed linear subspaces. Then $P_{\kk}$ commutes with $P_{\mm}$ if and only if $\kk = (\kk \cap \mm) \oplus \big(K \cap (\mm^\perp)\big)$, or equivalently,  
if and only if $P_\kk P_\mm \hh\subseteq P_\mm \hh$. 
\end{lem}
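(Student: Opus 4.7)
The plan is to deduce both equivalences from the standard fact that a closed subspace is invariant under a self-adjoint operator if and only if it reduces that operator. Since $P_\kk = P_\kk^*$, the subspace $\mm$ is invariant for $P_\kk$ iff $\mm^\perp$ is invariant for $P_\kk$ iff $P_\kk P_\mm = P_\mm P_\kk$. I would state this cleanly up front and then chain two short arguments to obtain both equivalences of the lemma.

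First I would dispose of the ``equivalently'' clause. Because $P_\mm \hh = \mm$, the condition $P_\kk P_\mm \hh \subseteq P_\mm \hh$ says literally that $\mm$ is $P_\kk$-invariant, which by the initial observation is already equivalent to $P_\kk P_\mm = P_\mm P_\kk$.

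For the remaining equivalence, I would first assume $P_\kk P_\mm = P_\mm P_\kk$ and, for $x \in \kk$, write $x = P_\mm x + (I - P_\mm)x$. The commutation forces $P_\mm x = P_\mm P_\kk x = P_\kk P_\mm x \in \kk$, so $P_\mm x \in \kk \cap \mm$, and analogously $(I - P_\mm)x \in \kk \cap \mm^\perp$. Conversely, suppose $\kk = (\kk \cap \mm) \oplus (\kk \cap \mm^\perp)$ and take $x \in \mm$. Decomposing $P_\kk x = z_1 + z_2$ with $z_1 \in \kk \cap \mm$ and $z_2 \in \kk \cap \mm^\perp$, I would compute
\[
\|z_2\|^2 = \langle z_1 + z_2, z_2 \rangle = \langle P_\kk x, z_2 \rangle = \langle x, P_\kk z_2 \rangle = \langle x, z_2 \rangle = 0,
\]
where the last equality uses $x \in \mm$ and $z_2 \in \mm^\perp$. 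Hence $z_2 = 0$, so $P_\kk x \in \mm$, which shows that $\mm$ is $P_\kk$-invariant and closes the loop.

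This lemma is folklore, so there is no genuine obstacle. The only point that needs care is the remark that for a self-adjoint operator, invariance on a subspace automatically upgrades to reducing the subspace; this is what allows the one-sided condition $P_\kk P_\mm \hh \subseteq P_\mm \hh$ to be equivalent to the symmetric commutation $P_\kk P_\mm = P_\mm P_\kk$.
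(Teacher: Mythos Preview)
Your argument is correct and entirely standard. Note, however, that the paper does not actually prove this lemma: it is introduced as ``well-known'' and stated without proof, so there is no approach to compare against. Your write-up supplies exactly the kind of short, self-contained justification one would expect for such a folklore result.
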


\section{Polynomial approximation}
In the following section we prove that the set of multiplication operators associated with a weighted shift on a directed tree lies in the closure of polynomials of the weighted shift in the topologies of strong and weak operator convergence. We follow here the idea of Shields (see \cite{shi}) who proved a corresponding result for classical weighted shifts.

For $w \in \T$, $f\colon V\to\C$, and $\hat{\varphi} \colon \N_0 \to \C$ we define $f_w\colon V\to\C$ and $\hat \varphi_w\colon\N_0\to \C$ by
\begin{align*}
f_w(u) = w^{|u|} f(u),\quad u\in V,
\end{align*}
and
\begin{align*}
\hat \varphi_w(n) = w^n \hat \varphi(n),\quad n\in\N_0.
\end{align*}
The assignments $w\mapsto f_w$ and $w\mapsto \hat\varphi_w$ have the following properties. 
\begin{lem}\label{piatek}
Assume that \eqref{stand2} holds and $\slam\in\bsb\big(\ell^2(V)\big)$. Then the following assertions are satisfied$:$
\begin{enumerate}
\item[(i)] For every $w \in \T$ and $f \in \ell^2(V)$ the function $f_w$ belongs to $\ell^2(V)$ and $\|f\| = \|f_w\|$.
\item[(ii)] For every $w \in \T$ and  $\hat\varphi \in \multi$ the function $\hat \varphi_w$ belongs to $\multib$, $\|\hat \varphi\| =\|\hat \varphi_w\|$, and
\begin{align}\label{dzem}
M_{\hat\varphi_w} f =\big( M_{\hat\varphi} f_{\bar w}\big)_w,\quad f\in\ell^2(V).
\end{align}
\item[(iii)] For every $f\in\ell^2(V)$ the mapping  $\T \ni w \to f_w \in \ell^2(V)$ is continuous.
\item[(iv)] For every $\hat \varphi \in \multib$ the function $\T \ni w \to M_{\hat \varphi_w} \in \bsb(\ell^2(V))$
is continuous in the strong operator topology.
\end{enumerate}
\begin{proof}
(i) This follows from the equality $|w|=1$.

(ii) Let $w\in \T$. By \eqref{multi1} we have
\begin{align}\notag
\big(\gphiw f\big)(v)&=\sum_{k=0}^{|v|} \lambda_{\paa^k{(v)}|v} w^k \hat \varphi(k) f (\paa^k(v))\\\notag
&=\sum_{k=0}^{|v|} \lambda_{\paa^k{(v)}|v}\hat \varphi(k) w^{|v|} \overline{w}^{|\paa^k(v)|}f(\paa^k(v)) \\\notag 
&=w^{|v|}\sum_{k=0}^{|v|} \lambda_{\paa^k{(v)}|v}\hat \varphi(k) 
\overline{w}^{|\paa^k(v)|}f(\paa^k(v))\\\label{falsyfikat}
&=\big(\gphi f_{\overline{w}}\big)_w(v), \quad v\in V,\ f\in \C^V.
\end{align}
This, according to (i) and \eqref{multi2}, implies
\begin{align}\label{dzmfw}
\big\{f\in\ell^2(V)\colon f_{\bar w}\in \dz{M_{\hat\varphi}}\big\}=\dz{\mphiw}.
\end{align}
Since $\hat\varphi\in\multib$, $\dz{\mphi}=\ell^2(V)$ and thus by \eqref{dzmfw} we get $\dz{\mphiw}=\ell^2(V)$. Hence $\hat\varphi_w\in\multib$. Clearly, \eqref{falsyfikat} yields \eqref{dzem}, which, combined with (i), implies the equality $\|\mphiw f\|=\|\mphi f\|$ for every $f\in\ell^2(V)$. Thus we get $\|\hat\varphi\|=\|\hat\varphi_w\|$.

(iii) Let $f\in \ell^2(V)$ and $w_0 \in \T$. Let $\varepsilon >0$. There exists a finite set $W \subseteq V$ such that $\|\chi_{_{W^c}} f\|=\sum_{u \in W^c} |f(u)|^2 < \varepsilon$. Then, by (i),  $\|\chi_{_{W^c}} f_w\|^2=\|(\chi_{_{W^c}} f)_w\|^2 < \varepsilon$ for every $w \in \T$.
For $w$ close enough to $w_0$ we have 
\begin{align*}
\|\chi_{_W} (f_w-f_{w_0})\|^2=\sum_{u \in W} |w^{|u|} - w_0^{|u|}|^2 |f(u)|^2 < \varepsilon,
\end{align*}
since the set $W$ is finite. Consequently, we get 
\begin{align*}
\|f_w - f_{w_0}\|\Le \|(f_w - f_{w_0}) \chi_{_W}\|+\|\chi_{_{W^c}}f_w\|+\|\chi_{_{W^c}}f_{w_0}\| < 3 \varepsilon^{1/2}.
\end{align*}
This gives (iii).

(iv) Let $f\in\ell^2(V)$, $w_0 \in \T$, and $\varepsilon>0$. Then there exists a finite set $W_1 \subseteq V$ such that $\|\chi_{_{W_1^c}} f \| < \varepsilon$. Hence, by (ii), we get
\begin{align}\label{dolary}
\|M_{\hat \varphi_w} ( \chi_{_{W_1^c}} f)\| \Le \varepsilon\| \hat \varphi \|, \quad w\in\T.
\end{align}
In view of \eqref{Dziobak}, we have 
\begin{align} \label{Wojtek}
\big(M_{\hat \varphi_w} - M_{\hat \varphi_{w_0}}\big) e_u  =  \sum_{ v \in \dess(u)} \Big(w^{|v| -|u|} - w_0^{|v|-|u|} \Big) \lambda_{u|v} \hat \varphi (|v| -|u|) e_v,\quad w\in \T,\ u\in V.
\end{align}
Since $W_1$ is finite, there exists a finite set $W_2 \subseteq V$ such that 
\begin{align}\label{mobot}
\|\chi_{_{W_2^c}}\mphi e_u \|^2=\sum_{v \in \dess(u) \cap W_2^c } \big|\lambda_{u|v} \hat \varphi (|v| -|u|)\big|^2 < \varepsilon^2,\quad u \in W_1.
\end{align}
Hence, by \eqref{Wojtek} and \eqref{mobot}, we get
\begin{align}\label{negatyw}
\Big\|\chi_{_{W_2^c}}\big(M_{\hat \varphi_w} - M_{\hat \varphi_{w_0}}\big) e_u\Big\|^2
&\Le 4 \sum_{v \in \dess(u) \cap W_2^c } \big|\lambda_{u|v} \hat \varphi (|v| -|u|)\big|^2< 4\varepsilon^2,\quad w\in \T,\ u\in W_1.
\end{align}
Since $W_1$, $W_2$ are finite, for every $w$ close enough to $w_0$ the equality $\big|w^{|v|-|u|} - w_0^{|v|-|u|}\big|^2 < \varepsilon$ is satisfied for all $u \in W_1$ and $v \in \dess(u) \cap W_2$. Thus, by \eqref{Wojtek}, we have
\begin{align}\label{negatyw2}
\Big\|\chi_{_{W_2}}\big(M_{\hat \varphi_w} - M_{\hat \varphi_{w_0}}\big) e_u \Big\|^2 \Le \varepsilon \, \|\mphi e_u\|^2 \Le \varepsilon^2 \|\hat \varphi\|^2,\quad u\in W_1.
\end{align}
Combining \eqref{negatyw} and \eqref{negatyw2} we deduce that for every $w$ close enough to $w_0$ we have
\begin{align*}
\Big\|\big(M_{\hat \varphi_w} - M_{\hat \varphi_{w_0}}\big) e_u \Big\|< \varepsilon (2+\|\hat\varphi\|),\quad  u \in W_1.
\end{align*}
Therefore, the above and \eqref{dolary} imply that that for every $w$ close enough to $w_0$ we have
\begin{align*}
\|M_{\hat \varphi_w} f - M_{\hat \varphi_{w_0}} f \| & \Le 2\varepsilon \|\hat\varphi \| +\big\|\big(\mphiw -M_{\hat\varphi_{w_0}}\big)( \chi_{_{W_1}} f)\big\| \\
&\Le 2\varepsilon \|\hat\varphi \|+\sum_{u \in W_1}|f(u)|\big\|\big(M_{\hat \varphi_w} - M_{\hat \varphi_{w_0}}\big) e_u \big\|< C\varepsilon
\end{align*}
with some positive constant $C$. This completes the proof.
\end{proof}
\end{lem}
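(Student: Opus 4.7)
The plan is to prove the four parts in order, exploiting that $|w|=1$ makes both $f\mapsto f_w$ and $\hat\varphi\mapsto \hat\varphi_w$ pointwise isometric transformations. Part (i) is immediate from $|f_w(u)|=|f(u)|$. For (ii), the heart of the matter is the computational identity $\gphiw f = (\gphi f_{\bar w})_w$, which I would derive by rewriting the factor $w^k$ appearing in \eqref{multi1} as $w^{|v|}\bar w^{|\paa^k(v)|}$ (using $|\paa^k(v)|=|v|-k$ together with $w\bar w=1$) and pulling $w^{|v|}$ outside the sum. Combined with (i), this identity shows $f\in\dz{\mphiw}$ if and only if $f_{\bar w}\in\dz{\mphi}$, so the assumption $\dz{\mphi}=\ell^2(V)$ forces $\dz{\mphiw}=\ell^2(V)$ and yields \eqref{dzem}; the norm equality then follows by taking $\ell^2$-norms on both sides of \eqref{dzem}, invoking (i), and swapping the roles of $w$ and $\bar w$.

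For (iii), a standard three-$\varepsilon$ argument suffices. I would pick a finite set $W\subseteq V$ with $\|\chi_{_{W^c}}f\|<\varepsilon$; by (i), this bound is inherited by $f_w$ uniformly in $w$. On the finite set $W$ only finitely many continuous scalars $w\mapsto w^{|u|}$ appear, so $\|\chi_{_{W}}(f_w-f_{w_0})\|$ is small for $w$ near $w_0$, and the triangle inequality closes the estimate.

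Part (iv) is the real work and mirrors the structure of (iii) but at two levels. Given $f\in\ell^2(V)$, $w_0\in\T$, and $\varepsilon>0$, I would first truncate $f$: choose a finite $W_1\subseteq V$ with $\|\chi_{_{W_1^c}}f\|<\varepsilon$. Invoking (ii) immediately gives $\|\mphiw(\chi_{_{W_1^c}}f)\|\Le\varepsilon\|\hat\varphi\|$ uniformly in $w\in\T$, so it suffices to establish continuity of $w\mapsto \mphiw e_u$ at $w_0$ for each of the finitely many $u\in W_1$. Formula \eqref{Dziobak} then licenses a second truncation: pick a finite $W_2\subseteq V$ large enough that $\sum_{v\in\dess(u)\cap W_2^c}|\lambda_{u|v}\hat\varphi(|v|-|u|)|^2<\varepsilon^2$ for every $u\in W_1$. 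Since $|w|=1$, this bounds the $\ell^2$-mass of $(\mphiw-M_{\hat\varphi_{w_0}})e_u$ on $W_2^c$ uniformly in $w$. On the finite set $W_2$ only finitely many scalars $w^{|v|-|u|}$ enter, each continuous in $w$, so the restriction of the difference to $W_2$ is jointly small for $w$ near $w_0$. Weighting by $|f(u)|$ and summing over $u\in W_1$, then combining with the uniform tail bound over $W_1^c$, produces strong continuity at $w_0$. The main obstacle is precisely controlling the two infinite tails (of $f$ and of each $\mphiw e_u$) uniformly in $w$; this is exactly what (ii) supplies, by certifying $\|\hat\varphi_w\|=\|\hat\varphi\|$ and allowing the tail estimates valid for $\mphi$ to be transferred to every $\mphiw$.
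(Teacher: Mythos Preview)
Your proposal is correct and follows essentially the same route as the paper's own proof: the identity $\gphiw f=(\gphi f_{\bar w})_w$ via $w^k=w^{|v|}\bar w^{|\paa^k(v)|}$ for (ii), the finite truncation plus uniform tail control for (iii), and the two-level truncation (first of $f$ to $W_1$, then of each $\mphi e_u$ to $W_2$) for (iv), with (ii) providing the $w$-uniform bounds in exactly the way you describe. The only cosmetic difference is that the paper records the explicit constant $4$ arising from $|w^{|v|-|u|}-w_0^{|v|-|u|}|\le 2$ on the $W_2^c$ tail, whereas you absorb it into ``uniformly in $w$''.
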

In view of Lemma \ref{piatek}(iii), given $\hat\varphi\in\multib$, for any continuous function $q\colon\T\to\C$ and any function $f \in \ell^2(V)$ the mapping
\begin{align*}
\T \ni w \to q(w)M_{\hat \varphi_w} f  \in \ell^2(V)  
\end{align*}
is continuous, which implies that the mapping 
\begin{align*}
\T \ni w \to q(w)M_{\hat \varphi_w}  \in \bsb(\ell^2(V))  
\end{align*}
is SOT-integrable with respect to the normalized Lebesgue measure on $\T$ (cf. \cite[Section 3.1.2]{nik2}). As a consequence we may consider a bounded linear operator on $\ell^2(V)$ given by the integral
\begin{align*}
\int_{\T} q(w)M_{\hat \varphi_w} \D w,
\end{align*}
where 
\begin{align*}
\Big(\int_{\T} q(w)M_{\hat \varphi_w} \D w \Big )f = \int_{\T} q(w) M_{\hat \varphi_w} f \D w,\quad f\in\ell^2(V). 
\end{align*}
Clearly, we have 
\begin{align}\label{el1}
\Big\|\int_{\T} q(w)M_{\hat \varphi_w} \D w \Big\| \leq \|q\|_1 \|\hat\varphi\|
\end{align}
with $\|q\|_1$ standing for the $L^1$-norm of $q$, and
\begin{align}\label{slabacalka}
\Big\langle \Big(\int_{\T} q(w)M_{\hat \varphi_w} \D w \Big )f, g \Big\rangle = \int_{\T} q(w)\langle M_{\hat \varphi_w} f,g \rangle \D w,\quad f,g\in\ell^2(V). 
\end{align}

Given  $p\in\mathcal{T}$ of degree $n\in\N_0$, i.e., $p(z)=\sum_{k=-n}^n p_k z^k$ with $\{p_k\}_{k=-n}^n\subseteq \C$, we define $\hat p \colon \N_0 \to \C$ to be the mapping such that \begin{align*}
\hat p(k) = \left\{ \begin{array}{cl} 
p_k & \text{if } k \leq n, \\
0 &  \text{if } k>n.
\end{array}
\right.
\end{align*}
Clearly, $\hat p$ has a finite support and thus it belongs to $\multi$ by \cite[Theorem 4.3\,(ii)]{b-d-p-2015}. 

It seems natural to expect that with a reasonable choice of a function $q\colon \T\to\C$ the operator $\int_\T q(w)M_{\hat \varphi_w} \D w$ is a multiplication operator with appropriate symbol. The following result prove this to be the case. The situation seems to be the most interesting for co-analytic polynomials.
\begin{prop}\label{szpinak}
Assume that \eqref{stand2} holds and $\slam\in\bsb\big(\ell^2(V)\big)$. Let $p\in\mathcal{T}$ and $\hat \varphi \in \multi$. Then $\hat p  \, \hat \varphi \in \multi$ and 
\begin{align*}
\int_{\T} p(\overline{w})  M_{\hat \varphi_w} \D w = M_{\hat p \,  \hat \varphi}. 
\end{align*}
\end{prop}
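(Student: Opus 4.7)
The plan is to verify the identity on basis vectors $e_u$ via Fourier orthogonality on $\T$, then extend to all of $\ell^2(V)$ using the fact that $M_{\hat p\hat\varphi}$ is automatically closed while the integral on the left is bounded.

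First I would fix $u,v\in V$ and use \eqref{slabacalka} to write
\begin{align*}
\Big\langle \Big(\int_\T p(\bar w) M_{\hat\varphi_w}\,\D w\Big)e_u,\,e_v\Big\rangle
= \int_\T p(\bar w)\,\big(M_{\hat\varphi_w}e_u\big)(v)\,\D w.
\end{align*}
Plugging in \eqref{dziobak} applied to $\hat\varphi_w$, the integrand vanishes unless $v\in\dess(u)$, in which case it equals $\lambda_{u|v}\,\hat\varphi(|v|-|u|)\, p(\bar w)\, w^{|v|-|u|}$. Expanding $p(z)=\sum_{k=-n}^n p_k z^k$ and invoking the orthogonality of $\{w^j\}_{j\in\Z}$ on $\T$, the integral $\int_\T p(\bar w)w^{|v|-|u|}\,\D w$ picks out the coefficient $p_{|v|-|u|}$, which by construction equals $\hat p(|v|-|u|)$ (and is $0$ when $|v|-|u|>n$, which is the only relevant failure mode since $|v|\geq|u|$ on $\dess(u)$). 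Comparing with \eqref{Dziobak}, this is precisely $\langle M_{\hat p\hat\varphi}e_u,e_v\rangle$ computed formally; hence
\begin{align*}
\Big(\int_\T p(\bar w) M_{\hat\varphi_w}\,\D w\Big)e_u = \sum_{v\in\dess(u)} \lambda_{u|v}\,\hat p(|v|-|u|)\,\hat\varphi(|v|-|u|)\,e_v,
\end{align*}
the sum on the right being finite because $\hat p$ has finite support.

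Next I would upgrade this to the assertion that $\hat p\,\hat\varphi\in\multi$ and $M_{\hat p\hat\varphi}$ equals the integral operator $T:=\int_\T p(\bar w)M_{\hat\varphi_w}\,\D w$. The finite-support property of $\hat p$ shows that $\ee\subseteq \dz{M_{\hat p\hat\varphi}}$ and that $M_{\hat p\hat\varphi}$ agrees with $T$ on $\ee$; in particular, $M_{\hat p\hat\varphi}\upharpoonright\ee$ is bounded (by $\|q\|_1\|\hat\varphi\|$, cf.\ \eqref{el1}). Given any $f\in\ell^2(V)$, approximate $f$ by a sequence $f_n\in\ee$. Then $Tf_n\to Tf$ while $M_{\hat p\hat\varphi}f_n=Tf_n$ also converges to $Tf$. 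Since $M_{\hat p\hat\varphi}$ is automatically closed (by \cite[Lemma 4.1]{b-d-p-2015}), we conclude $f\in\dz{M_{\hat p\hat\varphi}}$ and $M_{\hat p\hat\varphi}f=Tf$. This yields $\dz{M_{\hat p\hat\varphi}}=\ell^2(V)$, i.e.\ $\hat p\,\hat\varphi\in\multi$, together with the desired equality $M_{\hat p\hat\varphi}=T$.

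I do not anticipate a serious obstacle: the content is essentially the Fourier orthogonality argument on $\T$. The only point that requires a moment of care is the combination of closedness of $M_{\hat p\hat\varphi}$ with boundedness of the SOT-integral $T$ to pass from action on $\ee$ to action on all of $\ell^2(V)$ without a priori knowing $\hat p\,\hat\varphi\in\multi$; that step is what actually delivers membership in $\multi$, rather than merely the identity between the two operators on a dense subspace.
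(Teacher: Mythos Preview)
Your proof is correct and follows essentially the same route as the paper: both compute $\big\langle\big(\int_\T p(\bar w)M_{\hat\varphi_w}\,\D w\big)e_u,e_v\big\rangle$ via \eqref{slabacalka} and \eqref{dziobak} and identify the result using Fourier orthogonality on $\T$. The only difference is that the paper first reduces by linearity to monomials $p(w)=w^k$ and obtains $\hat p\,\hat\varphi\in\multi$ immediately from the finite-support criterion \cite[Theorem~4.3(ii)]{b-d-p-2015}, rather than via your closedness-of-$M_{\hat p\hat\varphi}$ argument; once both operators are known to be bounded, equality on the basis $\{e_u\}$ suffices without the extra approximation step.
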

\begin{proof}
The function $\hat p \, \hat \varphi$ has a finite support and thus, by \cite[Theorem 4.3\,(ii)]{b-d-p-2015}, it belongs to $\multi$. 
By the linearity, it suffices to prove the rest of the claim for $p(w)=w^k$, with $k\in\Z$. 

First we consider $k\in\N$ and we prove that
\begin{align}\label{zbyszek}
\int_{\T} w^k  M_{\hat \varphi_w} \D w 
=0,\quad  k\in\N.
\end{align}
To this end, fix $k\in\N$. Let $u\in V$. Then, by \eqref{slabacalka} and \eqref{Dziobak}, for every $v\in V$ we have\allowdisplaybreaks
\begin{align*}
\Big\langle\Big(\int_{\T} w^kM_{\hat \varphi_w} \D w \Big)\, e_u, e_v \Big\rangle 
&=\int_{\T} w^k  \is{M_{\hat \varphi_w} e_u}{e_v} \D w\\
&= \left\{
\begin{array}{cl} 
\lambda_{u|v}\hat\varphi(n)\displaystyle \int_{\T} w^{n+k}  \D w & \text{ if } v \in \dzin{n}{u},\ n\in\N_0 ,\\
0 & \text{otherwise}.
\end{array} \right.\\
&=0.
\end{align*}
This yields \eqref{zbyszek}.

Now, we consider $k\in\N_0$ and we prove that
\begin{align}\label{zbyszek+}
\int_{\T} w^{-k}  M_{\hat \varphi_w} \D w 
=M_{\chi_{_{\{k\}}} \hat \varphi},\quad k\in\N_0.
\end{align}
For this, fix $k\in\N_0$. Let $u\in V$. Combining \eqref{slabacalka} and \eqref{dziobak} we deduce that for every $v\in V$ the following holds\allowdisplaybreaks
\begin{align*}
\Big\langle\Big(\int_{\T} w^{-k} M_{\hat \varphi_w} \D w \Big)\, e_u, e_v \Big\rangle 
&= \int_{\T} w^{-k}\langle M_{\hat \varphi_w} e_u,e_v \rangle \D w\\
&= \left\{ 
    \begin{array}{cl}
    \lambda_{u|v}\hat\varphi(n)\displaystyle \int_{\T} w^{n-k}  \D w & \text{if }  v \in \dzin{n}{u},\ n\in\N_0 ,\\
     0 & \text{otherwise}.
    \end{array} \right.\\
&= \left\{ 
    \begin{array}{cl}
    \lambda_{u|v}\, \hat\varphi(k) & \text{if } v \in \dzin{k}{u},\\
    0                                  & \text{otherwise}.
    \end{array} \right.\\
&= \is{M_{\chi_{_{\{k\}}} \hat \varphi} e_u}{e_v}.
\end{align*}
Using the fact that for $p(w)=w^k$ we have $\hat p=\chi_{_{\{k\}}}$ completes the proof.
\end{proof}
The framework for polynomial approximation within $\multi$ is set by the following result.
\begin{lem}\label{aproks}
Assume that \eqref{stand2} holds and $\slam\in\bsb\big(\ell^2(V)\big)$. Let $\hat\varphi\in\multi$ and let $\{p_n\}_{n=1}^\infty\subseteq\mathcal{T}$. Assume that the following conditions are satisfied$:$
\begin{enumerate}
\item[(a)] 
$\alpha:=\sup\big\{ | \hat p_{n}(k)|\colon n\in\N, k\in\N_0\big\}<\infty$, 
\item[(b)] $\lim_{n\to\infty} \hat p_{n}(k)=1$ for every $k\in\N_0$,
\item[(c)] $\beta:=\sup_{n\in\N}\int_\T |p_n(w)|\D w<\infty$.
\end{enumerate}
Then the following assertions hold$:$
\begin{enumerate}
\item[(i)] for every $n\in\N$, $\hat p_{n}\hat\varphi\in\multi$,
\item[(ii)] for every $n\in\N$, $\|M_{\hat p_n\hat\varphi}\|\Le \beta\, \|M_{\hat\varphi}\|$,
\item[(iii)] $M_{\hat p_n\hat\varphi}\to M_{\hat\varphi}$ in the strong operator topology as $n\to\infty$.
\end{enumerate}
\end{lem}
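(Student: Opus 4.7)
The plan proceeds by exploiting the integral representation furnished by Proposition~\ref{szpinak}:
\begin{align*}
M_{\hat p_n \hat\varphi} = \int_\T p_n(\overline w)\, M_{\hat\varphi_w}\,\D w.
\end{align*}
Part (i) is then an immediate consequence of Proposition~\ref{szpinak} itself, which already guarantees $\hat p_n \hat\varphi \in \multi$ whenever $p_n \in \mathcal{T}$ and $\hat\varphi\in\multi$.

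For (ii) I would apply the norm estimate \eqref{el1} with $q(w) = p_n(\overline w)$. Since the normalized Lebesgue measure on $\T$ is invariant under conjugation, $\|q\|_1 = \|p_n\|_1 \le \beta$ by hypothesis (c), so
\begin{align*}
\|M_{\hat p_n \hat\varphi}\| \le \|q\|_1\, \|\hat\varphi\| \le \beta\, \|M_{\hat\varphi}\|.
\end{align*}

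For (iii) the plan is to first verify SOT-convergence on the linear span $\ee$ of the basis $\{e_u\}_{u\in V}$ via dominated convergence, and then bootstrap to $\ell^2(V)$ using the uniform bound from (ii). Formula~\eqref{Dziobak} gives
\begin{align*}
\big(M_{\hat p_n \hat\varphi} - M_{\hat\varphi}\big) e_u = \sum_{v \in \dess(u)} \big(\hat p_n(|v|-|u|) - 1\big)\,\lambda_{u|v}\,\hat\varphi(|v|-|u|)\,e_v,
\end{align*}
and therefore
\begin{align*}
\big\|\big(M_{\hat p_n \hat\varphi} - M_{\hat\varphi}\big) e_u\big\|^2 = \sum_{v \in \dess(u)} \big|\hat p_n(|v|-|u|) - 1\big|^2\,\big|\lambda_{u|v}\,\hat\varphi(|v|-|u|)\big|^2.
\end{align*}
Assumption (a) provides the dominating summable majorant $(\alpha+1)^2\,|\lambda_{u|v}\,\hat\varphi(|v|-|u|)|^2$, whose total (by \eqref{Dziobak} again) equals $(\alpha+1)^2 \|M_{\hat\varphi} e_u\|^2 < \infty$; assumption (b) sends each summand to zero. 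Dominated convergence therefore yields $\|(M_{\hat p_n \hat\varphi} - M_{\hat\varphi}) e_u\| \to 0$ for every $u \in V$, which linearity propagates to all of $\ee$. A standard density argument, approximating an arbitrary $f \in \ell^2(V)$ by an element of $\ee$ and using the uniform bound from (ii) to control the tail, extends SOT-convergence to every $f \in \ell^2(V)$.

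No step presents a genuine obstacle, because Proposition~\ref{szpinak} has already done the conceptual heavy lifting of representing $M_{\hat p_n \hat\varphi}$ as an integral of the translated operators $M_{\hat\varphi_w}$. The only subtlety worth flagging is how the three hypotheses on $\{p_n\}$ are precisely calibrated to the three distinct mechanisms: (c) underwrites the uniform operator bound, (a) supplies the dominating majorant for dominated convergence on each $e_u$, and (b) furnishes the pointwise convergence to one.
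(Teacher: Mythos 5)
Your proposal is correct and follows essentially the same route as the paper: (i) via the finite-support/multiplier membership already secured in Proposition~\ref{szpinak}, (ii) via the integral representation and the $L^1$ bound \eqref{el1}, and (iii) via dominated convergence on each basis vector $e_u$ (the paper dominates by $\alpha\,|(M_{\hat\varphi}e_u)(v)|$, you by $(\alpha+1)\,|\lambda_{u|v}\hat\varphi(|v|-|u|)|$, which is the same device) followed by the standard density bootstrap using the uniform bound from (ii). No gaps.
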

\begin{proof}
For every $n\in\N$, $p_n$  is a polynomial and thus $\hat p_n\hat\varphi$ has a finite support. Now, it suffices to use \cite[Theorem 4.3\,(ii)]{b-d-p-2015} to get (i).

On the basis of Proposition \ref{szpinak}, Lemma \ref{piatek}\,(ii), and \eqref{el1} we have
\begin{align*}
\|M_{\hat p_n\hat\varphi} f\|
=\Big\|\int_{\T} p_n(\overline{w}) M_{ \hat \varphi_w}  f\D w\Big\|\Le \| M_{\hat \varphi}\|\, \|f\|\, \int_{\T} |p_n(\overline{w})|\D w,\  f\in\ell^2(V), n\in\N.
\end{align*}
This and (c) implies (ii).

For the proof of (iii) we first observe that it is sufficient to show that
\begin{align}\label{mleko}
\lim_{n\to\infty}M_{\hat p_n\hat\varphi}e_u= M_{\hat\varphi}e_u,\quad u\in V.
\end{align}
Indeed, if this is satisfied, then the standard argument using (ii) and the approximation in $\ell^2(V)$ by finite linear combinations of $e_u$'s yields $\lim_{n\to\infty}M_{\hat p_n\hat\varphi}f= M_{\hat\varphi}f$ for every $f\in\ell^2(V)$. 
%
%
For the proof of \eqref{mleko} fix $u\in V$. In view of \eqref{dziobak} and the assumptions on coefficients of $p_n$'s we have
\begin{align*}
\lim_{n\to\infty}\big(M_{\hat p_n \hat \varphi}e_u\big) (v)
&=
\begin{cases}
\lim_{n\to\infty}\lambda_{u|v} \hat p_n(j)\hat\varphi(j) & \text{if } v\in \dzin{j}{u}, j\in\N_0,\\
0 & \text{otherwise},
\end{cases}\\
&=
\begin{cases}
\lim_{n\to\infty}\lambda_{u|v}\hat\varphi(j) & \text{if } v\in \dzin{j}{u}, j\in\N_0,\\
0 & \text{otherwise},
\end{cases}\\
&=\big(M_{\hat \varphi} e_u\big)(v).
\end{align*}
Moreover, we see that $\big|\big(M_{\hat p_n \hat \varphi}e_u\big) (v)\big|\leqslant \alpha\  \big|\big(M_{\hat \varphi}e_u\big) (v)\big|$ for every $v\in V$ and every $n\in\N$. Applying the Lebesgue dominated convergence theorem we get \eqref{mleko} and complete the proof.
\end{proof}
Recall that Fejer kernels fullfill the assumptions of Lemma \ref{aproks} (see \cite[p. 17]{hof}).
\begin{cor} \label{cesaro}
Assume that \eqref{stand2} holds and $\slam\in\bsb\big(\ell^2(V)\big)$. Let $\hat\varphi\in\multi$ and let $\{p_n\}_{n=1}^\infty\subseteq\mathcal{T}$ be a Fejer kernels i.e.  $p_n(w) = \sum_{k=-n}^n \big(1 - \tfrac{|k|}{n+1}\big)w^k$, $n\in\N$. Then $M_{\hat p_n\hat\varphi}\to M_{\hat\varphi}$ in the strong operator topology as $n\to\infty$.
\end{cor}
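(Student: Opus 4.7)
The plan is to deduce this as a direct application of Lemma \ref{aproks} to the specific sequence of Fej\'er kernels. So the only work is to check that the three hypotheses (a), (b), (c) of that lemma are satisfied; once they are, assertion (iii) of the lemma gives exactly the SOT-convergence $M_{\hat p_n\hat\varphi}\to M_{\hat\varphi}$.

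First, for the Fej\'er kernel $p_n(w)=\sum_{k=-n}^n\big(1-\tfrac{|k|}{n+1}\big)w^k$ one reads off
\begin{align*}
\hat p_n(k)=
\begin{cases}
1-\tfrac{k}{n+1} & \text{if } 0\Le k\Le n,\\
0 & \text{if } k>n.
\end{cases}
\end{align*}
Hypothesis (a) then follows because $0\Le \hat p_n(k)\Le 1$ for all $n\in\N$ and $k\in\N_0$, so one may take $\alpha=1$. Hypothesis (b) is equally immediate: for fixed $k\in\N_0$ and every $n\Ge k$ one has $\hat p_n(k)=1-\tfrac{k}{n+1}\to 1$ as $n\to\infty$.

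For hypothesis (c), the standard fact about Fej\'er kernels is that they are non-negative and have integral one with respect to the normalised Lebesgue measure on $\T$; indeed $p_n(w)\Ge 0$ (the Fej\'er kernel equals the arithmetic mean of the Dirichlet kernels and can be written as a squared modulus), while $\int_\T p_n(w)\D w=\hat p_n(0)=1$. Consequently $\int_\T|p_n(w)|\D w=1$ for every $n\in\N$, so $\beta=1$ works in (c). This is precisely the content of the reference to \cite[p.~17]{hof} mentioned just before the corollary.

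With (a), (b), (c) verified, Lemma \ref{aproks}(iii) applies and yields $M_{\hat p_n\hat\varphi}\to M_{\hat\varphi}$ in the strong operator topology as $n\to\infty$. There is no real obstacle here; the entire proof is a one-line verification that Fej\'er kernels meet the abstract hypotheses already prepared in Lemma \ref{aproks}.
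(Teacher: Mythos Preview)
Your proof is correct and follows exactly the paper's approach: the corollary is stated as an immediate consequence of Lemma \ref{aproks}, and the paper merely remarks that the Fej\'er kernels satisfy its hypotheses (a), (b), (c), which is precisely what you verify.
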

That $\multio$ is norm closed was shown in \cite[Theorem 4.3\,(iv)]{b-d-p-2015}. In fact, essentially the same proof can be used to justify the closedness of $\multio$ in the strong operator topology (we include the version of the proof for completeness).  
\begin{prop}\label{sot}
Assume that \eqref{stand2} holds and $\slam\in\bsb\big(\ell^2(V)\big)$. Then the space $\multio$ is closed in the strong operator topology. 
\end{prop}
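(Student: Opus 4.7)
My plan is to imitate the norm-closedness argument of \cite[Theorem 4.3\,(iv)]{b-d-p-2015}, the key observation being that every step there relies only on coordinatewise convergence of $M_{\hat\varphi_\alpha}e_u$ against the orthonormal basis $\{e_v\}_{v\in V}$, which is delivered by SOT-convergence exactly as by norm-convergence. Accordingly, I would begin by fixing $T \in \bsb(\ell^2(V))$ in the SOT-closure of $\multio$ together with a net $\{M_{\hat\varphi_\alpha}\}_\alpha \subseteq \multio$ with $M_{\hat\varphi_\alpha} \to T$ strongly; in particular $\langle M_{\hat\varphi_\alpha} e_u, e_v\rangle \to \langle T e_u, e_v\rangle$ for every $u, v \in V$. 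The task is to produce $\hat\psi\in\multi$ with $T=M_{\hat\psi}$.

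Next, I would construct the candidate symbol. Because $\tcal$ is leafless under \eqref{stand2}, $\dzin{n}{\koo}\neq\emptyset$ for every $n\in\N_0$; for such an $n$ pick any $v \in \dzin{n}{\koo}$ and set
\begin{align*}
\hat\psi(n) := \lambda_{\koo|v}^{-1}\,\langle T e_\koo, e_v\rangle.
\end{align*}
The definition is independent of the choice of $v$: by \eqref{Dziobak}, $\lambda_{\koo|v}^{-1}\langle M_{\hat\varphi_\alpha} e_\koo, e_v\rangle = \hat\varphi_\alpha(n)$ does not depend on $v$, and taking the limit along $\alpha$ preserves that property. As a by-product, $\hat\varphi_\alpha(k) \to \hat\psi(k)$ for every $k\in\N_0$.

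Finally, I would check that $T=M_{\hat\psi}$. Passing \eqref{dziobak} to the limit yields, for any $u\in V$, that $\langle T e_u, e_v\rangle$ equals $\lambda_{u|v}\hat\psi(n)$ when $v\in\dzin{n}{u}$ and vanishes otherwise, which matches $\langle M_{\hat\psi}e_u, e_v\rangle$; hence $T e_u = M_{\hat\psi}e_u$ for every $u\in V$. Thus $T$ and $M_{\hat\psi}$ agree on the dense subspace $\ee$. Since $\mpsi$ is closed by \cite[Lemma 4.1]{b-d-p-2015} and $T$ is bounded, approximating an arbitrary $f\in\ell^2(V)$ by a sequence $\{f_k\}\subseteq\ee$ and applying closedness to the identities $M_{\hat\psi} f_k = T f_k \to T f$ gives $f\in\dz{M_{\hat\psi}}$ and $M_{\hat\psi} f = T f$. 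Therefore $\dz{M_{\hat\psi}}=\ell^2(V)$, i.e.\ $\hat\psi\in\multi$, and $T=M_{\hat\psi}\in\multio$. I do not anticipate any genuine obstacle; the only delicate point is the level-independence that makes $\hat\psi$ well-defined, which is simply the limiting form of the corresponding property of the approximants $\hat\varphi_\alpha$ encoded in \eqref{Dziobak}.
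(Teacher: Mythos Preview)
Your proposal is correct and follows essentially the same route as the paper: both extract the limit symbol $\hat\psi(k)=\lim_\alpha\hat\varphi_\alpha(k)$ from the values $(M_{\hat\varphi_\alpha}e_{\koo})(v)=\lambda_{\koo|v}\hat\varphi_\alpha(|v|)$ and then identify $T$ with $M_{\hat\psi}$. The only minor difference is in the last step: the paper verifies $(Tf)(v)=(\varGamma_{\hat\psi}f)(v)$ directly for every $f\in\ell^2(V)$ using that the defining sum \eqref{multi1} has only $|v|+1$ terms, whereas you first check agreement on basis vectors and then invoke closedness of $M_{\hat\psi}$ to pass to arbitrary $f$; both arguments are standard and equally valid.
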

\begin{proof} 
Let $A\in\bsb(\ell^2(V))$ belong to the closure of $\multio$ in the strong operator topology and let $\{\hat\varphi_\alpha\}_{\alpha\in A}\subseteq \multi$ be a net such that $\lim_{\alpha}M_{\hat\varphi_\alpha}f=Af$ for every $f\in \ell^2(V)$. We first note that for every $k\in\N_0$, the $\lim_\alpha \hat\varphi_\alpha(k)$ exists. Indeed, fixing $k\in\N_0$ we find $v\in V$ such that $|v|=k$. Then, by \eqref{Dziobak}, for such a $v$ we have
\begin{align*}
\lambda_{\koo|v}\,  \hat\varphi_\alpha(k)=\lambda_{\koo|v}\,  \hat\varphi_\alphab(|v|)
=\big(M_{\hat\varphi_\alpha}e_{\koo}\big)(v).
\end{align*}
Since $\lim_{\alpha}\big(M_{\hat\varphi_\alpha}e_{\koo}\big)(v)= (Ae_{\koo})(v)$ and $\lambda_{\koo|v}\neq 0$, the limit $\lim_\alpha \hat\varphi_\alpha(k)$ has to exist. Thus we may define the mapping $\hat\varphi\colon \N_0\to\C$ by
\begin{align*}
\hat\varphi(k)=\lim_\alpha \hat\varphi_\alpha(k),\quad k\in\N_0.
\end{align*}
Since for every $f\in\ell^2(V)$ we have
\begin{align*}
\lim_\alpha \sum_{j=0}^{|v|}\lambda_{\paa^j(v)|v}\hat\varphi_\alpha(j) f\big(\paa^j(v)\big)
=\sum_{j=0}^{|v|}\lambda_{\paa^j(v)|v}\hat\varphi(j) f\big(\paa^j(v)\big),\quad v\in V,
\end{align*}
we deduce that $\big(A f\big)(v)=\big(\varGamma_{\hat\varphi} f\big) (v)$, $v\in V$. Boundedness of $A$ yields $\dz{\mphi}=\ell^2(V)$ and $A=\mphi$, which completes the proof.
\end{proof}
Combining Corollary \ref{cesaro} and Proposition \ref{sot} we infer that the set of multiplication operators associated to a weighted shift is in fact the closure of the set of polynomials of the weighted shift in question both in the strong and weak operator topology.
\begin{thm}\label{wielomian}
Suppose $\tcal=(V,E)$ is a countably infinite rooted and leafless directed tree, $\lambdab=\{\lambda_v\}_{v \in V^\circ}\subseteq  (0,\infty)$, and $\slam\in\bsb\big(\ell^2(V)\big)$. Then 
\begin{align*}
\multio = \overline{\{ p(\slam) \colon p\in\C[X]\}}^{\text{SOT}} = \overline{\{ p(\slam) \colon p\in\C[X]\}}^{\text{WOT}}.
\end{align*}
\end{thm}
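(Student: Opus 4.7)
The plan is to sandwich $\multio$ between the SOT and WOT closures of $\{p(\slam)\colon p\in\C[X]\}$, using Corollary \ref{cesaro} for one inclusion, Proposition \ref{sot} for the other, and a convexity argument to identify the two closures.

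First I would observe that every polynomial in $\slam$ is itself a multiplication operator with finite-support symbol. Concretely, formula \eqref{Dziobak} shows that if $\hat{\psi}\colon\N_0\to\C$ is supported in $\{0,1,\dots,n\}$, then for each $u\in V$
\begin{align*}
M_{\hat\psi}e_u=\sum_{k=0}^{n}\hat\psi(k)\sum_{v\in \dzin{k}{u}}\lambda_{u|v}e_v=\sum_{k=0}^{n}\hat\psi(k)\slam^k e_u,
\end{align*}
so $M_{\hat\psi}=\sum_{k=0}^{n}\hat\psi(k)\slam^k$. In particular, $\{p(\slam)\colon p\in\C[X]\}\subseteq \multio$, and conversely any multiplication operator whose symbol has finite support is a polynomial in $\slam$.

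Next I would upgrade this to the full inclusion $\multio\subseteq\overline{\{p(\slam)\colon p\in\C[X]\}}^{\text{SOT}}$. Fix $\hat\varphi\in\multi$ and take the Fejer kernels $p_n\in\mathcal{T}$ as in Corollary \ref{cesaro}. Since $\hat p_n$ is supported in $\{0,\dots,n\}$, so is $\hat p_n\hat\varphi$, and hence by the first step $M_{\hat p_n\hat\varphi}\in \{p(\slam)\colon p\in\C[X]\}$. Corollary \ref{cesaro} gives $M_{\hat p_n\hat\varphi}\to M_{\hat\varphi}$ in SOT, proving the inclusion. Combined with Proposition \ref{sot} (which yields the reverse inclusion, since $\multio$ is SOT-closed and contains all polynomials in $\slam$), I get
\begin{align*}
\multio=\overline{\{p(\slam)\colon p\in\C[X]\}}^{\text{SOT}}.
\end{align*}

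Finally, to replace SOT by WOT, I would invoke the standard fact that on a convex subset of $\bsb(\ell^2(V))$ the SOT and WOT closures coincide (the two topologies have the same continuous linear functionals, so Hahn-Banach separation applies). Since $\{p(\slam)\colon p\in\C[X]\}$ is a linear subspace, and thus convex, its SOT and WOT closures agree, which gives the final equality. There is no serious obstacle here; the work was done in Corollary \ref{cesaro} and Proposition \ref{sot}, and the present theorem is essentially a packaging of those two results with the convexity observation.
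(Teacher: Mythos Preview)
Your proposal is correct and follows essentially the same route as the paper: use \eqref{Dziobak} to identify polynomials in $\slam$ with finitely supported multipliers, apply Corollary \ref{cesaro} to get $\multio\subseteq\overline{\{p(\slam)\}}^{\mathrm{SOT}}$, invoke Proposition \ref{sot} for the reverse inclusion, and finish with the convexity argument for the WOT closure. Your write-up is just a slightly more explicit version of the paper's own proof.
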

\begin{proof}
Applying Corollary \ref{cesaro}, together with \eqref{Dziobak}, we see that $\multio$ lies in the closure of the set $\{p(\slam) \colon p\in\C[X]\}$ in the strong operator topology. Since, by Lemma \ref{sot}, the space $\multio$ is closed in the strong operator topology, the first of the equalities of the claim has to be satisfied. It is well-known that closures of convex sets in the strong and weak operator topologies coincide, hence the second equality of the claim follows.
\end{proof}

\section{Reflexivity}
In this section we prove that a large class of weighted shifts on directed trees (see Theorem \ref{refPB} below) consists of reflexive operators. 

We begin by describing $\Lat \multio$ and $\Alg \Lat \multio$ in terms of finitely supported multipliers.
\begin{prop}\label{cytrynaPB}
Assume that \eqref{stand2} holds and $\slam\in\bsb\big(\ell^2(V)\big)$. Then
\begin{align}\label{incubus1}
\Lat \multio = \Lat \big\{\mphi \colon \hat\varphi\in\multisk \big\}.
\end{align}
Moreover, we have
\begin{align*}
\Alg \Lat \multio=\Big(\big\{\mphi \colon \hat\varphi\in\multisk\big\}_\perp\cap \bsf_1(\hh)\Big)^\perp;
\end{align*}
in other words,  $\Alg \Lat \multio$ consists of all operators $A\in\bsb(\ell^2(V))$ such that for every $f,g \in \ell^2(V)$ the following condition is satisfied:
\begin{align*}
\bigg(\forall \hat \varphi\in\multisk\ \ \big\langle M_{\hat \varphi} f, g \big\rangle =0\bigg) \quad  \Longrightarrow \quad \is{Af}{g}=0.
\end{align*}
\end{prop}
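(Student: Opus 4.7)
The plan is to treat the two equalities in sequence, with the first used to reduce the second to a standard cyclic-subspace / rank-one duality argument. The essential input from earlier in the paper is Corollary \ref{cesaro}.

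For \eqref{incubus1}, the inclusion $\subseteq$ is immediate since $\multisk\subseteq\multi$. For the reverse inclusion, I would fix $\kk\in\Lat\{\mphi:\hat\varphi\in\multisk\}$ and an arbitrary $\hat\varphi\in\multi$, then invoke Corollary \ref{cesaro} with the Fejer kernels $\{p_n\}$. Each product $\hat p_n\hat\varphi$ has finite support, so $M_{\hat p_n\hat\varphi}$ belongs to $\{\mphi:\hat\varphi\in\multisk\}$ and therefore leaves $\kk$ invariant, while $M_{\hat p_n\hat\varphi}\to\mphi$ in the strong operator topology. Since $\kk$ is closed, the SOT limit $\mphi$ also leaves $\kk$ invariant.

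For the second equality, I first observe that by \eqref{Dziobak} (applied to the $\delta_k$'s) we have $\{\mphi:\hat\varphi\in\multisk\}=\{p(\slam):p\in\C[X]\}$, so this set is a unital subalgebra $\mathscr{W}$ of $\bsb(\ell^2(V))$. By \eqref{incubus1}, $\Alg\Lat\multio=\Alg\Lat\mathscr{W}$, and it remains to prove $\Alg\Lat\mathscr{W}=\big(\mathscr{W}_\perp\cap\bsf_1(\ell^2(V))\big)^\perp$. The key device is the rank-one representation of the trace: for $f,g\in\ell^2(V)$, the rank-one operator $T:=f\otimes g$ defined by $T h=\langle h,g\rangle f$ satisfies $\mathrm{tr}(BT)=\langle Bf,g\rangle$ for every $B\in\bsb(\ell^2(V))$. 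Hence $T\in\mathscr{W}_\perp\cap\bsf_1(\ell^2(V))$ iff $g\perp\overline{\mathscr{W}f}$, which is exactly the hypothesis in the ``in other words'' reformulation.

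For the inclusion $\subseteq$: if $A\in\Alg\Lat\mathscr{W}$ and $T=f\otimes g\in\mathscr{W}_\perp\cap\bsf_1$, set $\kk:=\overline{\mathscr{W}f}$; this is a closed $\mathscr{W}$-invariant subspace, contains $f$ (since $I\in\mathscr{W}$), and satisfies $g\in\kk^\perp$, so $Af\in\kk\perp g$ gives $\mathrm{tr}(AT)=\langle Af,g\rangle=0$. For $\supseteq$: if $A$ annihilates $\mathscr{W}_\perp\cap\bsf_1$ and $\kk\in\Lat\mathscr{W}$, then any $f\in\kk$ and $g\in\kk^\perp$ produce $T=f\otimes g\in\mathscr{W}_\perp\cap\bsf_1$ (since $\mathscr{W}f\subseteq\kk\perp g$), whence $\langle Af,g\rangle=0$; as $g$ ranges over $\kk^\perp$, this yields $Af\in\kk$.

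The main obstacle is really only in \eqref{incubus1}: once the SOT approximation of Corollary \ref{cesaro} is in hand, invariance passes to the limit and the reduction to the polynomial algebra is automatic. The second equality is then a textbook Sarason-type computation, and the only points requiring care are the bookkeeping between $\mathscr{W}_\perp$ and $\mathscr{V}^\perp$ and the use of the unital-algebra structure of $\mathscr{W}$ to ensure $f\in\overline{\mathscr{W}f}$ and that this cyclic subspace is $\mathscr{W}$-invariant.
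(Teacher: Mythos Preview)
Your proof is correct. The first equality \eqref{incubus1} is handled essentially as in the paper: both arguments rest on the SOT approximation by finitely supported multipliers coming from the Fejer kernels, with the paper routing this through Theorem~\ref{wielomian} while you invoke Corollary~\ref{cesaro} directly.

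For the second equality the routes diverge. The paper does not argue via cyclic subspaces; instead it combines \eqref{incubus1} with Theorem~\ref{wielomian} to obtain $\multio_\perp=\mathscr{W}_\perp$, then cites two external results: the fact (from Conway's operator-theory text) that $\multio_\perp$ and $(\Alg\Lat\multio)_\perp$ share the same rank-one operators, and Azoff's formula $\Alg\Lat\multio=\big((\Alg\Lat\multio)_\perp\cap\bsf_1(\hh)\big)^\perp$. Your argument is more elementary and self-contained: you prove the equality $\Alg\Lat\mathscr{W}=(\mathscr{W}_\perp\cap\bsf_1)^\perp$ from scratch via the cyclic-subspace device $\kk=\overline{\mathscr{W}f}$, effectively reproving the cited facts in this concrete unital-algebra setting. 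What the paper's approach buys is brevity and a clear connection to the general reflexivity machinery; what yours buys is that no outside references are needed and the role of the unital structure of $\mathscr{W}$ (ensuring $f\in\overline{\mathscr{W}f}$) is made explicit.
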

\begin{proof}
According to \cite[Theorem 4.3]{b-d-p-2015}, $\{p(\slam)\colon p\in\C[X]\}=\big\{\mphi \colon \hat\varphi\in\multisk \big\}$. Since, by Theorem \ref{wielomian}, $\multio$ is a WOT closure of $\big\{p(\slam)\colon p\in\C[X]\big\}$, we deduce \eqref{incubus1}. Using Theorem \ref{wielomian} and  \cite[Theorem 4.3]{b-d-p-2015} again, we get $\multio_\perp=\big\{\mphi \colon \hat\varphi\in\multisk \big\}_\perp$. Moreover, the preannihilators $\multio_\perp$ and $(\Alg\Lat\multio)_\perp$ have the same rank one operators (see \cite[Chapter 8, $\S$56]{con-ot}). Hence, by \eqref{incubus1}, we get
\begin{align}\label{stained1}
\big\{\mphi \colon \hat\varphi\in\multisk \big\}_\perp\cap \bsf_1(\hh)=(\Alg\Lat\multio)_\perp\cap \bsf_1(\hh).
\end{align}
On the other hand, we have (see \cite[Corollary 2.3]{az-1986-mams})
\begin{align}\label{stained2}
\Alg\Lat\multio=\big((\Alg\Lat\multio)_\perp\cap \bsf_1(\hh)\big)^\perp.
\end{align}
Combining \eqref{stained1} and \eqref{stained2} we get the claim.
\end{proof}
Suppose that \eqref{stand2} holds. Set (cf. \cite[p.\ 12]{b-d-p-2015})
\begin{align*}
r_2^\pth(\slam) = \lim_{n\to\infty}\inf \Big\{\big(\lambda_{\koo|v}\big)^{\frac{1}{|v|}}\colon v\in \pth, |v|\geqslant n\Big\}, \quad \pth\in\pp
\end{align*}
Below we show that the adjoint of any operator belonging to $\Alg\Lat \multio$ can be recovered from the data given by multiplication operators acting along the paths, whenever $r_2^\pth(\slam)>0$ for all $\pth\in\pp$. To this end, we need some notation: given a path $\pth=(V_\pth, E_\pth)\in\pp$ and a function $\hat\varphi_\pth\colon \N_0\to\C$ we put $\lambdab_\pth=\{\lambda_v\}_{v\in V_\pth^\circ}$ and define (using \eqref{multi1} and \eqref{multi2}) the multiplication operator
$M_{\hat \varphi_\pth}^{\lambdab_\pth} \colon \ell^2(V_\pth) \to \ell^2(V_\pth)$ relative to $\pth$.
\begin{lem}\label{alglatPB} 
Assume that \eqref{stand2} holds and $\slam\in\bsb\big(\ell^2(V)\big)$. Let $A\in \Alg\Lat \multio$. Then the following conditions are satisfied$:$
\begin{enumerate}
\item[(i)] $\ell^2(V_\pth) \in  \Lat A^*$ for every $\pth \in \pp$,
\item[(ii)] if $\pth\in\pp$ satisfies $r_2^\pth(\slam)>0$, then there exists a unique function $\hat \varphi_\pth \colon \N_0 \to \C$ such that  $A^*|_{\ell^2(V_\pp)} = \big(M_{\hat \varphi_\pth}^{\lambdab_\pth}\big)^{*}$,
\end{enumerate}
\end{lem}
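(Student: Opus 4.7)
The plan is to reduce both assertions to statements about the $(1,1)$-block of $A$ in the orthogonal decomposition $\ell^2(V)=\ell^2(V_\pth)\oplus\ell^2(V\setminus V_\pth)$, and then to invoke Shields's reflexivity theorem for classical weighted shifts.

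For (i), I would first verify that $\ell^2(V\setminus V_\pth)\in\Lat\slam$. If $u\notin V_\pth$ and $v\in\dzi{u}$, then $v\notin V_\pth$: otherwise $v=v_n$ for some $n\geqslant 1$ and $u=\pa{v_n}=v_{n-1}\in V_\pth$, contradicting the choice of $u$ (here I use that the subtree $\pth$ contains $\koo$, so every non-root vertex of $\pth$ has its parent in $\pth$). Hence $\slam e_u\in\ell^2(V\setminus V_\pth)$, and by linearity and continuity $\ell^2(V\setminus V_\pth)$ is $\slam$-invariant. Since polynomials in $\slam$ are WOT-dense in $\multio$ by Theorem \ref{wielomian}, and closed subspaces invariant under a net of operators are also invariant under its WOT-limit, we have $\Lat\multio=\Lat\slam$. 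Therefore $A\in\Alg\Lat\multio$ yields $A\ell^2(V\setminus V_\pth)\subseteq\ell^2(V\setminus V_\pth)$, and taking orthogonal complements gives $\ell^2(V_\pth)\in\Lat A^*$.

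Exactly the same subtree argument shows $\dess(u)\cap V_\pth=\emptyset$ whenever $u\notin V_\pth$, so by \eqref{Dziobak} every $\mphi\in\multio$ also leaves $\ell^2(V\setminus V_\pth)$ invariant. Consequently, with respect to $\ell^2(V)=\ell^2(V_\pth)\oplus\ell^2(V\setminus V_\pth)$, both $A$ and every element of $\multio$ take the block-lower-triangular form $A=\begin{pmatrix}A_{11}&0\\A_{21}&A_{22}\end{pmatrix}$. A direct computation from \eqref{multi1} identifies $(\mphi)_{11}$ with the path-multiplication operator $M_{\hat\varphi}^{\lambdab_\pth}$ on $\ell^2(V_\pth)$; in particular $(\slam)_{11}=S_\pth$, the classical injective weighted shift on $\ell^2(V_\pth)$ with weights $\lambdab_\pth$. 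Since $A^*|_{\ell^2(V_\pth)}=(A_{11})^*$, assertion (ii) reduces to proving that $A_{11}=M_{\hat\varphi_\pth}^{\lambdab_\pth}$ for a (necessarily unique) $\hat\varphi_\pth\colon\N_0\to\C$.

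To this end I would show $A_{11}\in\Alg\Lat S_\pth$. Given $\mm\in\Lat S_\pth$, the inflated subspace $\widetilde\mm=\mm\oplus\ell^2(V\setminus V_\pth)$ lies in $\Lat\slam=\Lat\multio$ (a direct check using the block form of $\slam$), so $A\widetilde\mm\subseteq\widetilde\mm$, and comparing $\ell^2(V_\pth)$-components in $A_{11}m+A_{21}m\in\mm+\ell^2(V\setminus V_\pth)$ forces $A_{11}m\in\mm$. Now the hypothesis $r_2^\pth(\slam)>0$ should translate into precisely the positive-radius condition under which Shields's classical reflexivity theorem (see \cite{shi}) applies to $S_\pth$, yielding $\Alg\Lat S_\pth=\overline{\{p(S_\pth)\colon p\in\C[X]\}}^{\mathrm{WOT}}=\{M_{\hat\varphi}^{\lambdab_\pth}\colon \hat\varphi\in\mathcal{M}(\lambdab_\pth)\}$, with the second equality being the one-path analogue of Theorem \ref{wielomian}. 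This gives $A_{11}=M_{\hat\varphi_\pth}^{\lambdab_\pth}$; uniqueness of $\hat\varphi_\pth$ follows from $(M_{\hat\varphi_\pth}^{\lambdab_\pth}e_{\koo})(v_k)=\lambda_{\koo|v_k}\hat\varphi_\pth(k)$ together with positivity of the weights along the path. The main obstacle I anticipate is precisely the matching of the ``lower radius'' quantity $r_2^\pth(\slam)$ with the spectral-type hypothesis needed by the classical reflexivity result on $\ell^2(V_\pth)\cong\ell^2(\N_0)$; once that translation is in place, the rest of the argument is routine bookkeeping with block-triangular matrices.
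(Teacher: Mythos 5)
Your argument is correct and ends where the paper's proof ends --- reduction to a classical weighted shift along $\pth$ with $r_2>0$ and an appeal to Shields's reflexivity theorem \cite[Proposition 37]{shi} --- but the middle of the argument is organized differently. The paper never writes down the block decomposition or inflates invariant subspaces: both parts are run through Proposition \ref{cytrynaPB}, i.e., through the rank-one preannihilator characterization of $\Alg\Lat\multio$. For (i) one notes that $\is{f}{\mphi e_v}=0$ for all $\hat\varphi\in\multisk$ whenever $f\in\ell^2(V_\pth)$ and $v\notin V_\pth$ (by \eqref{Dziobak}), so the implication in Proposition \ref{cytrynaPB} forces $\is{Ae_v}{f}=0$; for (ii) the same bilinear-form characterization is transferred to the compressions and yields $A^*|_{\ell^2(V_\pth)}\in\Alg\Lat\big(\mathscr{M}(\lambdab_\pth)^*\big)$. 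You instead verify directly that $\ell^2(V\setminus V_\pth)\in\Lat\slam$, identify $\Lat\multio$ with $\Lat\slam$ via WOT-limits, and lift each $\mm\in\Lat S_\pth$ to $\mm\oplus\ell^2(V\setminus V_\pth)\in\Lat\slam$ to get $A_{11}\in\Alg\Lat S_\pth$; this is a legitimate and somewhat more elementary route, since within this lemma it bypasses the trace-duality machinery behind Proposition \ref{cytrynaPB} and works with invariant subspaces of the single operator $S_\pth$ rather than with the whole algebra $\mathscr{M}(\lambdab_\pth)$. The one step you flag as a possible obstacle is in fact immediate and is exactly how the paper concludes: writing $v_k$ for the vertex of $\pth$ with $|v_k|=k$ and $\mu_k=\lambda_{v_{k+1}}$, one has $\lambda_{\koo|v_k}=\mu_0\cdots\mu_{k-1}$, hence $r_2^\pth(\slam)=\liminf_{k\to\infty}(\mu_0\cdots\mu_{k-1})^{1/k}=r_2(S_{\mub})>0$, which is precisely the hypothesis under which Shields proves reflexivity and the uniqueness of the symbol.
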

\begin{proof} 
(i) Let $\pth \in \pp$. Fix $f\in\ell^2(V_\pth)$ and $v\in V\setminus V_\pth$. Then, by \eqref{Dziobak}, we see that $\is{f}{M_{\hat \varphi} e_v}= 0 $ for every $\hat\varphi\in\multib$. This together with Proposition \ref{cytrynaPB} imply that $\is{A^* f}{e_v}=\is{f}{Ae_v}=0$. Thus $A^* f\subseteq \ell^2(V_\pth)$. This proves (i).

(ii) Fix $\pth\in\pp$. First note that $\ell^2(V_\pth) \in \Lat{M_{\hat \varphi}^*}$ for every $\hat\varphi\in \multi$, which follows from (i) applied to $M_{\hat\varphi}$. Also, for every $\hat\varphi\in \multi$, since $P_{\ell^2(V_\pth)}M_{\hat \varphi} f= M^{\lambdab_\pth}_{\hat\varphi}f$,   $f\in\ell^2(V_\pth)$, we deduce that $\hat\varphi\in \mathcal{M}(\lambdab_\pth)$ and $M_{\hat \varphi}^*|_{\ell^2(V_\pth)}=(M^{\lambdab_\pth}_{\hat\varphi})^*$. 

Now, suppose that $f,g\in \ell^2(V_\pth)$ satisfy $\is{(M_{\hat\varphi}^{\lambdab_\pth})^*f}{g}=0$ for every $\hat\varphi\in\multisk$. Then $\is{M_{\hat\varphi}^*f}{g}=0$ for every $\hat\varphi\in\multisk$. Hence, by Proposition \ref{cytrynaPB}, $\is{A^*|_{\ell^2(V_\pth)}f}{g}=\is{A^*f}{g}=0$. Applying Proposition \ref{cytrynaPB} again, we get $A^*|_{\ell^2(V_\pth)}\in \Alg\Lat \big((\mathscr{M}(\lambdab_\pth)^*\big)$. 

Clearly, $\pth$ is isomorphic with $\N_0$ and $P_{\ell^2(V_\pth)} \slam |_{\ell^2(V_\pth)}$ is unitarily equivalent to the classical weighted shift $S_{\mub}$ on $\ell^2(\N_0)$ with weights $\mub=\{\mu_k\}_{k=0}^\infty$ given by $\mu_k=\lambda_{v_{k+1}}$ with $v_k$, $k\in\N_0$, denoting the unique vertex $v\in V_\pth$ such that $|v_k|=k$ (cf. \cite[Proof of Theorem 6.2]{b-d-p-2015}). Since $r_2^\pth(\slam)>0$, we see that $r_2(S_{\mub})=\liminf_{k\to\infty}(\mu_0\cdots\mu_k)^{\frac{1}{k+1}}>0$, which implies that $S_\mub$ is reflexive (see \cite[Proposition 37]{shi}). In particular, there exists a unique $\hat \varphi_\pth\in \mathcal{M}(\lambdab_\pth)$ such that $A^*|_{\ell^2(V_\pth)}=\big(M^{\lambdab_\pth}_{\hat \varphi_{\pth}}\big)^*$ (see the proof of \cite[Proposition 37]{shi}).

\end{proof}
Recovering $\Alg\Lat \multio$ from the path induced multiplication operators enables us to prove reflexivity for a large class of weighted shifts on directed trees. 
\begin{thm}\label{refPB}
Suppose $\tcal=(V,E)$ is a countably infinite rooted and leafless directed tree, $\lambdab=\{\lambda_v\}_{v \in V^\circ}\subseteq  (0,\infty)$, and $\slam\in\bsb\big(\ell^2(V)\big)$. If $r_2^\pth(\slam)>0$ for every $\pth\in\pp$, then $\slam$ is reflexive.
\end{thm}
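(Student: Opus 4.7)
By Theorem \ref{wielomian} the smallest WOT-closed algebra containing $\slam$ and $I$ equals $\multio$, so reflexivity of $\slam$ reduces to proving $\Alg\Lat\multio = \multio$. The inclusion $\multio\subseteq\Alg\Lat\multio$ is immediate. For the reverse inclusion I plan to take an arbitrary $A\in\Alg\Lat\multio$ and build a single symbol $\hat\varphi\colon\N_0\to\C$ such that $A = M_{\hat\varphi}$.

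First, I extract a candidate $\hat\varphi$ by reading off the coefficients of $Ae_{\koo}$. Since $\tcal$ is leafless, $\{|v|=k\}$ is non-empty for every $k\in\N_0$, so for each $k$ I set $\hat\varphi(k) := \is{Ae_{\koo}}{e_v}/\lambda_{\koo|v}$ for some $v$ with $|v|=k$. Well-definedness follows from Proposition \ref{cytrynaPB} applied to $f=e_{\koo}$ and the test vector $g = \lambda_{\koo|v'}e_v - \lambda_{\koo|v}e_{v'}$ with $|v|=|v'|=k$: formula \eqref{dziobak} shows that $\is{M_{\hat\psi}f}{g}=0$ for every $\hat\psi\in\multisk$, so the proposition yields $\is{Af}{g}=0$, which is exactly the desired equality of ratios.

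Next I plan to propagate $\hat\varphi$ to every basis vector via Lemma \ref{alglatPB}(ii), where the hypothesis $r_2^\pth(\slam)>0$ enters. For each $\pth\in\pp$ the lemma supplies $\hat\varphi_\pth\in\mathcal{M}(\lambdab_\pth)$ with $A^*|_{\ell^2(V_\pth)} = (M^{\lambdab_\pth}_{\hat\varphi_\pth})^*$. Pairing this identity with $e_{\koo}$ and $e_v$ for $v\in V_\pth$ forces $\hat\varphi_\pth(k) = \hat\varphi(k)$ independently of $\pth$. For arbitrary $u\in V$ and $v\in\dess(u)$ I choose a path through both (which exists since $\tcal$ is leafless) and read off
$\is{Ae_u}{e_v} = \is{e_u}{A^*e_v} = \lambda_{u|v}\hat\varphi(|v|-|u|)$. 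For $v\notin\dess(u)$ the value is zero because $\ell^2(\dess(u))\in\Lat\multio$ (a direct consequence of \eqref{Dziobak}), so $A$ leaves $\ell^2(\dess(u))$ invariant. Comparing with \eqref{Dziobak} gives $Ae_u = M_{\hat\varphi}e_u$ for every $u\in V$.

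To conclude, I apply a standard closure argument: $A$ and $M_{\hat\varphi}$ agree on the dense subspace $\ee$, $A$ is bounded, and $M_{\hat\varphi}$ is automatically closed by \cite[Lemma 4.1]{b-d-p-2015}, so $\dz{M_{\hat\varphi}} = \ell^2(V)$ and $M_{\hat\varphi} = A$; in particular $\hat\varphi\in\multib$ and $A\in\multio$. The main technical obstacle is the well-definedness of $\hat\varphi$: the candidate defined from $Ae_{\koo}$ must agree across vertices of the same level lying in different branches of $\tcal$ and across different paths. The first issue is resolved by the algebraic criterion of Proposition \ref{cytrynaPB}, while the second relies on the path-by-path reflexivity packaged in Lemma \ref{alglatPB}(ii)—and precisely here the assumption $r_2^\pth(\slam)>0$ for every $\pth\in\pp$ is indispensable.
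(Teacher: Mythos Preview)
Your proof is correct and follows essentially the same approach as the paper's: both invoke Theorem~\ref{wielomian} to reduce to $\Alg\Lat\multio=\multio$, use Lemma~\ref{alglatPB}(ii) to produce path-wise symbols $\hat\varphi_\pth$, and employ Proposition~\ref{cytrynaPB} with the test vector $\lambda_{\koo|v'}e_v-\lambda_{\koo|v}e_{v'}$ to force these symbols to agree. The only differences are organizational---you first define $\hat\varphi$ from $Ae_{\koo}$ and then match each $\hat\varphi_\pth$ to it, whereas the paper shows pairwise agreement of the $\hat\varphi_\pth$---and you spell out the final closure argument that the paper leaves implicit in the line ``which implies that $A=M_{\hat\varphi}$.''
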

\begin{proof}
According to Theorem \ref{wielomian} we need to show that $\multio=\Alg \Lat \multio$. Suppose that $A\in \Alg \Lat \slam$. In view of Proposition \ref{cytrynaPB},  for every $\pth\in\pp$, $A^*|_{\ell^2(V_\pp)} = \big(M_{\hat \varphi_\pth}^{\lambdab_\pth}\big)^{*}$ with some function $\hat \varphi_\pth \colon \N_0 \to \C$. Observe that for any $\pth_1,\pth_2\in\pp$ we have $\hat\varphi_{\pth_{1}}=\hat\varphi_{\pth_2}$. Indeed, fix $\pth_1=(V_{\pth_1}, E_{\pth_1}),\pth_2=(V_{\pth_1}, E_{\pth_1})\in\pp$ and take $n\in\N_0$. Then there are unique $u\in V_{\pth_1}$ and $v\in V_{\pth_2}$ such that $|u|=|v|=n$. Let $f\in\ell^2(V)$ be given by $f=\lambda_{\koo|v}e_u-\lambda_{\koo|u}e_v$. By \eqref{dziobak} we have
\begin{align*}
\langle M_{\hat\varphi}^* f, e_{\koo} \rangle=0,\quad \hat\varphi\in\multisk.
\end{align*}
Therefore, \eqref{dziobak} and Proposition \ref{cytrynaPB}  imply that 
\begin{align*}
\lambda_{\koo|v}\lambda_{\koo|u}\Big({\hat\varphi_{\pth_1}}-{\hat\varphi_{\pth_2}}\Big)(n)
&= \lambda_{\koo|v} \is{\big(M_{\hat \varphi_{\pth_1}}^{\lambdab_{\pth_1}}\big)^*e_u}{e_{\koo}}- \lambda_{\koo|u}\is{\big(M_{\hat \varphi_{\pth_2}}^{\lambdab_{\pth_2}}\big)^*e_v}{e_{\koo}} \\
&=\is{A^* f}{e_{\koo}}=0.
\end{align*}
Since $n\in\N_0$ can be arbitrary we get that $\hat\varphi_{\pth_{1}}=\hat\varphi_{\pth_2}$. As a consequence, there is $\hat\varphi\colon \N_0\to \C$ such that $M^{\lambdab_\pth}_{\hat\varphi_\pth} =M^{\lambdab_\pth}_{\hat\varphi}$ for every $\pth\in\pp$. Since $M_{\hat \varphi}^*|_{\ell^2(V_\pth)}=(M^{\lambdab_\pth}_{\hat\varphi})^*$ (cf. the proof of Lemma \ref{alglatPB}\,(ii)), we get $A^*|_{\ell^2(\pth)}=M_{\hat \varphi}^*|_{\ell^2(V_\pth)}$ for every $\pth\in\pp$ which implies that $A=M_{\hat\varphi}$.  Hence, $\Alg\Lat\multio\subseteq \multio$. Since the reverse inclusion is always satisfied, we get the desired equality and complete the proof.
\end{proof}
\section{Two counterexamples}
In this section we provide two examples related to problems stated in \cite{b-d-p-2015}. The first one (see \cite[Problem 4.9]{b-d-p-2015}) asked whether there exist a weighted shift $\slam$ on a directed tree and a mapping $\hat\varphi\colon\N_0\to \C$ such that $\hat{\varphi} \notin \multib$ and the function $x\mapsto \sum_{k=0}^\infty\hat\varphi(k)z^k$ is bounded on $\{z\in \C\colon |z|<r(\slam)\}$, where $r(\slam)$ is the spectral radius of $\slam$. The Example \ref{mad} below shows that the answer to this problem is affirmative. It is worth noting that the weighted shift $\slam$ constructed in the example is in fact a classical weighted shift on $\ell^2(\N_0)$.
\begin{figure}[h] 
\begin{tikzpicture}[scale=0.8, transform shape]
\tikzstyle{every node} = [circle,fill=gray!30]
\node (e0) at (0,0) {$0$};
\node (e1) at (2,0) {$1$};
\node (e2) at (4,0) {$2$};
\node (e3) at (6,0) {$3$};
\node[fill=none] (ek) at (8,0) {\ldots};
 \draw[->] (e0) --(e1) node[pos=0.5,above = 5pt,fill=none] {$1$};
 \draw[->] (e1) --(e2) node[pos=0.5,above = 5pt,fill=none] {$2$};
 \draw[->] (e2) --(e3) node[pos=0.5,above = 5pt,fill=none] {$\tfrac{3}{2}$};
 \draw[->] (e3) --(ek) node[pos=0.5,above = 5pt,fill=none] {$\tfrac{4}{3}$};
\end{tikzpicture}
\caption{\label{zwyklyShift}}
\end{figure}
\begin{exa}\label{mad}
Let $\tcal=(V,E)$ be the directed tree given by (see Figure \ref{zwyklyShift})
\begin{align*}
V=\N_0,\quad E=\{ (n,n+1) \colon n \in \N_0\}.
\end{align*}
Let $\lambdab=\{\lambda_v\}_{v\in V^\circ}=\{\lambda_n\}_{n \in \N}\subseteq(0,\infty)$ be given by
\begin{align*}
\lambda_n = \left\{
\begin{array}{cl}1 & \text{if } n=1,\\
\frac{n}{n-1} & \text{if } n \neq 1.
\end{array}
\right.
\end{align*}
Finally, let $\slam$ be the weighted shift operator on $\tcal$ with weights $\lambdab$. Then $\slam \in \bsb(\ell^2(V))$ by \cite[Proposition 3.1.8]{j-j-s-2012-mams}. Clearly, we have
\begin{align*}
\lambda_1\cdot \ldots\cdot \lambda_n = n,\quad \lambda_{l+1} \cdot \ldots \cdot \lambda_{l+n} = \frac{l+n}{l},\quad n,l\in\N.
\end{align*}
Thus, by \cite[Lemma 6.1.1 and Proposition 3.1.8]{j-j-s-2012-mams}, we obtain $\|\slam^n\|=n+1$ for every $n\in\N$, and consequently, by the Gelfand's formula for the spectral radius, $r(\slam)=1$.

Now, we define a mapping $\hat{\varphi} \colon \N_0 \to \C$ by 
\begin{align*}
    \hat{\varphi}(k) = \left\{
\begin{array}{cl}
0 & \text{if } k=0,\\
k^{-\tfrac{3}{2}} & \text{if } k>0.
\end{array}
\right.
\end{align*} 
It follows that the function $z\mapsto \sum_{k=0}^{\infty} \hat{\varphi}(k) z^k$ is analytic and bounded in $\{z\in\C\colon |z|<1\}$. On the other hand, by \eqref{dziobak} we have
\begin{align*}
\sum_{k=1}^{\infty} \big|(\varGamma_{\hat{\varphi}}e_0)(k)\big|^2 = \sum_{k=1}^{\infty} |k\,\hat{\varphi}(k)|^2 = \sum_{k=1}^{\infty} \tfrac{1}{k}= \infty.
\end{align*}
Thus $\varGamma_{\hat{\varphi}}e_0\notin\ell^2(V)$, which means that $\hat{\varphi} \notin \multib$.
\end{exa}
The second example answers the question, asked in \cite[Problem 4.11]{b-d-p-2015}, whether for every weighted shift $\slam$ on a directed tree and every $\hat\varphi\in\multib$ we have $|\hat\varphi|\in\multib$? The answer given in Example \ref{offspring} below is negative. It is based on an example of Gaier (see \cite{gai}; we follow here the presentation of the example due to Zalcman from \cite[p. 125]{zal}).
\begin{figure}[ht] 
\begin{tikzpicture}[scale=0.8, transform shape]
\tikzstyle{every node} = [circle,fill=gray!30]
\node (e0) at (0,0) {$0$};
\node (e1) at (2,0) {$1$};
\node (e2) at (4,0) {$2$};
\node (e3) at (6,0) {$3$};
\node[fill=none] (ek) at (8,0) {\ldots};
 \draw[->] (e0) --(e1) node[pos=0.5,above = 5pt,fill=none] {$1$};
 \draw[->] (e1) --(e2) node[pos=0.5,above = 5pt,fill=none] {$1$};
 \draw[->] (e2) --(e3) node[pos=0.5,above = 5pt,fill=none] {$1$};
 \draw[->] (e3) --(ek) node[pos=0.5,above = 5pt,fill=none] {$1$};
\end{tikzpicture}
\caption{\label{zwyklyShift2}}
\end{figure}
\begin{exa}\label{offspring}
By Gaier's example, there exist a Jordan region $G$ and a conformal mapping $f\colon \Dbb\to G$ such that $f$ can be extended to a homeomorphism $F\colon \overline{\Dbb}\to\overline{G}$ with $F(1)=1$ and the length of the image of $[0,1]$ under $F$ is infinite. Let $f(z)=\sum_{n=0}^\infty a_n z^n$ for $z\in\Dbb$. Then we have (see \cite[p. 350]{gai} or \cite[p. 125]{zal})
\begin{align}\label{dreams}
\sum_{n=0}^\infty |a_n|=\int_0^1 |f^\prime(r)|\D r=\infty.
\end{align}
Define $\hat\varphi\colon \N_0\to\C$ by $\hat\varphi(n)=a_n$, $n\in\N$. Let $\tcal$ be the directed tree as in Example \ref{mad} and let $\slam$ be the shift on $\tcal$ with weights $\lambda_k=1$ for $k\in\N$ (this is in fact the classical unilateral shift on $\ell^2(\N_0)$, see Figure \ref{zwyklyShift2}). Then its spectral radius equals $1$, and $\hat\varphi\in\multib$ by \cite[Proposition 4.6]{b-d-p-2015}. On the other hand, $|\hat\varphi|$ cannot belong to $\multib$.  Indeed, if $|\hat\varphi|\in \multib$, then, by \cite[Corollary, p. 76]{shi}, the function $z\mapsto \sum_{k=0}^\infty |\hat\varphi(k)|\, z^k$ has to be analytic and bounded in $\Dbb$. However, in view of \eqref{dreams}, for every $R>0$ there is $N\in\N$ and $q\in(0,1)$ such that $\sum_{k=0}^N |\hat\varphi(k)|>R$ and $q^N> \frac12$. Then we get
\begin{align*}
\sum_{k=0}^\infty |\hat\varphi(k)| q^k \geqslant \sum_{k=0}^N |\hat\varphi(k)| q^k > \frac{R}{2}.  
\end{align*}
Clearly, this is in contradiction with the boundedness of $z\mapsto \sum_{k=0}^\infty |\hat\varphi(k)|\, z^k$.
\end{exa}
\section{The Wold-type decomposition}
In this section we investigate a Wold-type decomposition for weighted shifts on directed trees. We show that the so-called balanced weighted shifts on directed trees have an orthogonal Wold decomposition.

We begin by recalling the some information concerning the adjoint of a weighted shift on a directed tree taken from \cite[Proposition 3.4.1]{j-j-s-2012-mams} and \cite[Lemma 1.2(iii)]{c-t}.
\begin{prop}\label{wodaPB}
Let $\slam\in\bsb(\ell^2(V))$ be a weighted shift on a directed tree $\tcal = (V,E)$ with weights $\lambdab = \{ \lambda_v\}_{v \in V^\circ}$. Then the following assertions are satisfied$:$
\begin{enumerate}
\item[(i)] $\big(\slam^*f\big)(u) = \sum_{v \in \dzi{u} } \overline{\lambda_v} f(v)$ for every $u\in V$ and every $f \in \ell^2(V)$,
\item[(ii)] $\nul{\slam^*} = \Big\{ f \in \ell^2(V) \colon \sum_{v \in \dzi{u}} \overline{\lambda_v} f(v) =0 \text{ for every } u \in V\Big\}$,
\item[(iii)] $(\slam^{*n}\slam^n f)(u)=\|\slam^n e_u\|^2f(u) \text{ for every } f \in \ell^2(V)$.
\end{enumerate}
\end{prop}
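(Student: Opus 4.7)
The plan is to verify all three assertions by direct computation using the orthonormal basis $\{e_u\}_{u\in V}$ and the defining formula for $\slam$.

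For (i), I would first note the formula $\slam e_u=\sum_{v\in\dzi{u}}\lambda_v e_v$, which is immediate from the definition of $\slam$: the function $\slam e_u$ vanishes on all $v$ except those whose parent is $u$, and on such $v$ it takes the value $\lambda_v$. The sum is convergent in $\ell^2(V)$ because $\slam$ is bounded, so $\sum_{v\in\dzi{u}}|\lambda_v|^2<\infty$. Then, for $f\in\ell^2(V)$ and $u\in V$, the adjoint identity yields
\begin{align*}
(\slam^* f)(u)=\is{\slam^* f}{e_u}=\is{f}{\slam e_u}=\sum_{v\in\dzi{u}}\overline{\lambda_v}\,f(v),
\end{align*}
the sum being absolutely convergent by Cauchy--Schwarz applied to the two $\ell^2$ sequences $\{f(v)\}_{v\in\dzi{u}}$ and $\{\lambda_v\}_{v\in\dzi{u}}$.

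Assertion (ii) is an immediate consequence of (i), since $f\in\nul{\slam^*}$ if and only if $(\slam^* f)(u)=0$ for every $u\in V$.

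For (iii), I would prove by induction on $n\in\N_0$ the formula
\begin{align*}
\slam^n e_u=\sum_{v\in\dzin{n}{u}}\lambda_{u|v}\,e_v,\quad u\in V.
\end{align*}
The case $n=0$ is trivial (with the convention $\lambda_{u|u}=1$), and the inductive step uses the identity from (i) together with the partition $\dzin{n+1}{u}=\bigsqcup_{w\in\dzi{u}}\dzin{n}{w}$ and the telescoping $\lambda_{u|v}=\lambda_{u|w}\lambda_{w|v}$ when $w=\paa^{k}(v)$ for some $k$. Since each $v\in V$ with $|v|\geqslant n$ has exactly one ancestor at distance $n$, the sets $\{\dzin{n}{u}\}_{u\in V}$ are pairwise disjoint, so for distinct $u,u'\in V$ we obtain $\is{\slam^n e_u}{\slam^n e_{u'}}=0$. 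Hence
\begin{align*}
\is{\slam^{*n}\slam^n e_u}{e_{u'}}=\is{\slam^n e_u}{\slam^n e_{u'}}=\|\slam^n e_u\|^2\,\delta_{u,u'},
\end{align*}
that is, $\slam^{*n}\slam^n e_u=\|\slam^n e_u\|^2 e_u$ for every $u\in V$. Since $\ee$ is dense in $\ell^2(V)$ and $\slam^{*n}\slam^n$ is bounded, (iii) follows by linearity and continuity.

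There is no genuine obstacle here: the whole argument reduces to computing on the basis $\{e_u\}_{u\in V}$ and checking that $\slam^{*n}\slam^n$ is a diagonal operator. The only delicate point is to keep track of the (possibly infinite) sums over $\dzi{u}$ and $\dzin{n}{u}$, which is harmless thanks to the standing boundedness assumption on $\slam$.
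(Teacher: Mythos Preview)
Your proof is correct. Note, however, that the paper does not prove this proposition at all: it is stated as a recollection of known facts, with parts (i)--(ii) taken from \cite[Proposition 3.4.1]{j-j-s-2012-mams} and part (iii) from \cite[Lemma 1.2(iii)]{c-t}. Your direct computation on the basis $\{e_u\}_{u\in V}$ is the standard argument one would expect to find in those references, and it goes through without difficulty.
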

\begin{cor}\label{rest}
Let $\slam\in\bsb(\ell^2(V))$ be a weighted shift on a directed tree $\tcal = (V,E)$ with weights $\lambdab = \{ \lambda_v\}_{v \in V^\circ}$. Then $\big\{\chi_{\dzi{v}}f\colon f\in\nul{\slam^*}\big\}\subseteq\nul{\slam^*}$ for every $v\in V$.
\end{cor}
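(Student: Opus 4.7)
The plan is to apply the characterization of $\nul{\slam^*}$ furnished by Proposition \ref{wodaPB}(ii) and exploit the basic fact that distinct vertices of a directed tree have disjoint sets of children (each vertex has at most one parent). First I would fix $f\in\nul{\slam^*}$ and $v\in V$, and note that $|\chi_{\dzi{v}}f|\Le |f|$ so $\chi_{\dzi{v}}f\in\ell^2(V)$. By Proposition \ref{wodaPB}(ii) it remains to check that
\begin{align*}
\sum_{w\in\dzi{u}}\overline{\lambda_w}\,(\chi_{\dzi{v}}f)(w)=0\quad\text{for every } u\in V.
\end{align*}

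Next I would split into two cases according to whether $u=v$ or $u\neq v$. If $u=v$, then $\dzi{u}=\dzi{v}$ so $(\chi_{\dzi{v}}f)(w)=f(w)$ for every $w\in\dzi{u}$, and the sum equals $\sum_{w\in\dzi{u}}\overline{\lambda_w}f(w)$, which vanishes because $f\in\nul{\slam^*}$. If $u\neq v$, I would observe that $\dzi{u}\cap\dzi{v}=\emptyset$: indeed any common child $w$ would have $u$ and $v$ both as parents, contradicting the uniqueness of the parent in a directed tree. Hence $(\chi_{\dzi{v}}f)(w)=0$ for every $w\in\dzi{u}$, and the sum is trivially zero. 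In both cases the defining condition from Proposition \ref{wodaPB}(ii) holds, so $\chi_{\dzi{v}}f\in\nul{\slam^*}$.

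There is essentially no obstacle here; the only substantive input is the tree-theoretic fact about disjointness of children sets, and everything else is immediate from the explicit description of $\nul{\slam^*}$.
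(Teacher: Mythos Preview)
Your proof is correct and is exactly the intended argument: the paper states this result as an immediate corollary of Proposition \ref{wodaPB} without giving a separate proof, and your verification via the explicit description of $\nul{\slam^*}$ together with the disjointness of $\dzi{u}$ and $\dzi{v}$ for $u\neq v$ is precisely the one-line check the authors have in mind.
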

It was proved in \cite[Lemma 3.4, Proof of Theorem 2.7]{c-t} that the Cauchy dual of $\slam$ is analytic, hence $\slam$ has the Wold-type decomposition (see Theorem \ref{studenciPB}(i) below) due to Shimurin's result (see \cite[Proposition 2.7]{shi}). We prove this fact using elementary properties of weighted shifts on directed trees. We provide, in Theorem \ref{studenciPB} below, a more detailed information concerning the decomposition. It is easy to see that the factors in the decomposition are mutually orthogonal for quasinormal operators. A quasinormal weighted shift on a directed tree has very regular distribution of the weights along the whole directed tree. We show that weighted shifts on directed trees whose Wold-type decomposition is orthogonal are exactly those having the same kind of regular distribution of the weights along the generations of the trees. We call those trees balanced. We precede the result with an auxiliary lemma.
\begin{lem}\label{balans}
Assume that \eqref{stand1} holds and $\slam\in\bsb(\ell^2(V))$. Then the following two conditions are equivalent:
\begin{itemize}
\item[(i)] $\|\slam e_u\| = \|\slam e_v\|$ for every $u, \, v \in V$ such that $|u| = |v|$,
\item[(ii)] $\|\slam^n e_{u}\| = \|\slam^n e_{v}\|$ for every $n\in\N$ and all $u, \, v \in V$ such that $|u| = |v|$,
\end{itemize}
Moreover, if $\slam$ is injective, then any of the above conditions is equivalent to the following:
\begin{itemize}
\item[(iii)] $\|\slam^n e_{u}\| = \|\slam^n e_{v}\|$ for every $n\in\N$ and all $u, v \in V$  such that $\pa{u} = \pa{v}$.
\end{itemize}
\end{lem}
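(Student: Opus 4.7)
The plan is to reduce the whole statement to the single squared-norm quantity $c_n(u):=\|\slam^n e_u\|^2$ together with the recursion it satisfies. By \eqref{Dziobak} applied to the finite-support symbol $\hat\varphi=\chi_{\{n\}}$ one has $\slam^n e_u=\sum_{v\in\dzin{n}{u}}\lambda_{u|v}\,e_v$, hence
\begin{align*}
c_0(u)=1,\qquad c_n(u)=\sum_{v\in\dzin{n}{u}}\lambda_{u|v}^2\quad(n\ge 1).
\end{align*}
The disjoint decomposition $\dzin{n+1}{u}=\bigsqcup_{w\in\dzi{u}}\dzin{n}{w}$, combined with the factorization $\lambda_{u|v}=\lambda_w\,\lambda_{w|v}$ whenever $w=\paa^n(v)\in\dzi{u}$, immediately yields
\begin{align*}
c_{n+1}(u)=\sum_{w\in\dzi{u}}\lambda_w^2\,c_n(w),\qquad u\in V,\ n\in\N_0.
\end{align*}
This recursion does essentially all the work.

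The equivalence (i) $\Leftrightarrow$ (ii) is then immediate. Clearly (ii) $\Rightarrow$ (i) by restricting to $n=1$. For (i) $\Rightarrow$ (ii) I would argue by induction on $n$: the base $n=1$ is hypothesis (i), and if $c_n(w)$ depends only on $|w|$, say $c_n(w)=\gamma_n(|w|)$, then the recursion collapses to $c_{n+1}(u)=\gamma_n(|u|+1)\,c_1(u)$, which depends only on $|u|$ after one more application of (i).

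The implication (ii) $\Rightarrow$ (iii) is immediate because siblings share the same level. For the converse (iii) $\Rightarrow$ (ii), which is the only point where the injectivity hypothesis is invoked, I would induct on the common level $k=|u|=|v|$. The cases $k=0,1$ are trivial: at level one, all vertices are children of the root and therefore form a single sibling class, so (iii) applies directly for every $n$. For the inductive step from $k$ to $k+1$, fix $u,v$ at level $k+1$. Applying the recursion at $\paa(u)$ and using (iii) to make $c_n$ constant on the sibling set $\dzi{\paa(u)}\ni u$ collapses it to
\begin{align*}
c_{n+1}(\paa(u))=c_n(u)\,c_1(\paa(u)),
\end{align*}
and analogously $c_{n+1}(\paa(v))=c_n(v)\,c_1(\paa(v))$. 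The inductive hypothesis at level $k$ provides both $c_{n+1}(\paa(u))=c_{n+1}(\paa(v))$ and $c_1(\paa(u))=c_1(\paa(v))$; since $c_1(\paa(u))\ge \lambda_u^2>0$ by positivity of the weights, division yields $c_n(u)=c_n(v)$.

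The main (mild) obstacle is precisely this last inductive step: one has to use hypothesis (iii) twice — once to collapse the right-hand side of the recursion into a single product, and once more (through the inductive hypothesis at the preceding level) in order to match the factors $c_1(\paa(u))$ and $c_1(\paa(v))$ — and then to observe that positivity of the weights makes the closing division legitimate.
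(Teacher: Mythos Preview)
Your proof is correct and follows essentially the same route as the paper's. Both arguments rest on the recursion $c_{n+1}(u)=\sum_{w\in\dzi{u}}\lambda_w^2\,c_n(w)$ and, for (iii)$\Rightarrow$(ii), on collapsing it via (iii) to $c_{n+1}(\paa(u))=c_n(u)\,c_1(\paa(u))$ before inducting; the paper phrases the induction over the relation $\paa^k(u)=\paa^k(v)$ rather than over the level $|u|=|v|$, but in a rooted tree these coincide. One small remark: you announce that injectivity is ``invoked'' in the last step, but your actual justification for dividing by $c_1(\paa(u))$ is $c_1(\paa(u))\ge\lambda_u^2>0$, which uses only the positivity of the weights from \eqref{stand1} --- the paper instead appeals to injectivity to secure $\|\slam^n e_{v_j}\|\neq0$, so your version is in fact marginally sharper at this point.
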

\begin{proof}
Assume (i) holds. Then (ii) is satisfied with $n=1$. Assume that it holds for some $n\in\N$. Then for $u,v\in V^\circ$ such that $|u|=|v|$, by \cite[Proposition 3.1.3]{j-j-s-2012-mams}, we have
\begin{align*}
\big\|\slam^{n+1}e_{\pa{u}}\big\|^2&=\big\|\slam^{n}\slam e_{\pa{u}}\big\|^2=\Big\|\slam^{n}\sum_{w\in\dzi{\pa{u}}}\lambda_w e_w\Big\|^2\\
&=\big\| \slam^{n} e_u\big\|^2 \sum_{w\in\dzi{\pa{u}}} |\lambda_w|^2 =\big\| \slam^{n} e_v\big\|^2 \sum_{w\in\dzi{\pa{v}}} |\lambda_w|^2\\
&=\big\|\slam^{n+1}e_{\pa{v}}\big\|^2.
\end{align*}
This, \cite[Proposition 3.1.3]{j-j-s-2012-mams}, and induction yields (ii) for every $n\in\N$.

Evidently, (ii) implies (i) and (iii).

Now, consider the ``moreover'' part of the claim. Assume that (iii) is satisfied. To prove (i) it is enough to show by induction on $k$ that
\begin{align} \label{indukcja}
\begin{minipage}{75ex}
for every $k\in\N$ the equality $\|\slam^n e_u\|= \| \slam^n e_v \|$ holds for every $n \in \N$ and all $u, v \in V$ such that  $\pan{k}{u} = \pan{k}{v}$. 
\end{minipage}
\end{align} 
For $k=1$ the condition in \eqref{indukcja} is equivalent to (iii). Suppose that the condition in \eqref{indukcja} is valid for some $k \in \N$. Let $u_1$, $u_2 \in V$ be such that $\pan{k+1}{u_1} = \pan{k+1}{u_2}$ and define $v_1 = \pa{u_1}$, $v_2 = \pa{u_2}$. Since $\pan{k}{v_1} = \pan{k}{v_2}$ we have
\begin{equation*}
 \|\slam^{n} e_{v_j}\|^2 = \sum_{w\in\dzi{v_j}}|\lambda_w|^2 \|\slam^{n-1} e_{w}\|^2 = \|\slam^{n-1} e_{u_j}\|^2 \cdot \| \slam e_{v_j}\|^2, \quad j =1,2.
\end{equation*}
By the injectivity of $\slam$ and the induction assumption we have $\|\slam^n e_{v_1}\| = \|\slam^n e_{v_2}\| \neq 0$ for every $n \in \N$. This implies that $\|\slam^{n-1}e_{u_1}\| = \|\slam^{n-1}e_{u_2}\|$ for every $n \in \N$. The induction  gives \eqref{indukcja}, which completes the proof.
\end{proof}
\newpage 
A weighted shift $\slam\in \bsb(\ell^2(V))$ satisfying condition (i) (resp., condition (iii)) of Lemma \ref{balans} is called \emph{balanced} (resp., {\em locally power balanced}).
\begin{thm} \label{studenciPB}
Suppose $\tcal=(V,E)$ is a countably infinite rooted directed tree, $\lambdab=\{\lambda_v\}_{v \in V^\circ} \subseteq  (0,\infty)$, and $\slam\in\bsb(\ell^2(V))$. Then the following three assertions hold:
\begin{itemize}
\item[(i)] $\ell^2(V) = \nul{\slam^*} \oplus \bigvee_{n=1}^{\infty} \slam^n( \nul{\slam^*})$,
\item[(ii)] if $\slam$ is injective, then $\slam^n \big(\nul{ \slam^{*}} \big) \cap \slam^m \big( \nul{ \slam^*} \big)=\{0\}$ for every $n, m \in \N_0$ such that $n\neq m$,
\item[(iii)] if $\slam$ is locally power balanced, then $\slam^n \big(\nul{ \slam^{*}} \big) \perp \slam^m \big( \nul{ \slam^*} \big)$ for every $n, m \in \N_0$ such that $n\neq m$,
\item[(iv)] if $\slam^n \big(\nul{ \slam^{*}} \big) \perp \slam^m \big( \nul{ \slam^*} \big)$ for every $n, m \in \N_0$ such that $n\neq m$, and $\slam$ is injective, then $\slam$ is balanced.
\end{itemize}
\end{thm}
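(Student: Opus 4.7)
For (i), I would show by induction on $|u|$ that $e_u\in\bigvee_{k=0}^{|u|}\slam^k(\nul{\slam^*})$, from which the spanning half of the decomposition follows immediately. The base $u=\koo$ is clear from Proposition \ref{wodaPB}(ii), since $\koo\notin\dzi{v}$ for any $v\in V$, so $e_{\koo}\in\nul{\slam^*}$. For the inductive step with $u\in V^\circ$, set $\alpha=\lambda_u/\|\slam e_{\pa u}\|^2$ (well-defined because $u\in\dzi{\pa u}$ forces $\|\slam e_{\pa u}\|\Ge\lambda_u>0$) and consider $f:=e_u-\alpha\,\slam e_{\pa u}$. A direct check using Proposition \ref{wodaPB}(ii) shows that $f$ is supported on $\dzi{\pa u}$ and satisfies the sibling-orthogonality relation at $\pa u$, so $f\in\nul{\slam^*}$; the inductive hypothesis then places $\slam e_{\pa u}$, and hence $e_u$, in $\bigvee_{k=0}^{|u|}\slam^k(\nul{\slam^*})$. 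Orthogonality $\nul{\slam^*}\perp\slam^k(\nul{\slam^*})$ for $k\Ge 1$ is trivial, since $\slam^* h=0$ for $h\in\nul{\slam^*}$, and yields the direct-sum decomposition.

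For (ii), suppose $f\in\slam^n(\nul{\slam^*})\cap\slam^m(\nul{\slam^*})$ with $n>m$, and write $f=\slam^n g=\slam^m h$ with $g,h\in\nul{\slam^*}$. Injectivity of $\slam$ (which, given the positivity of the weights, forces leaflessness of $\tcal$) yields $\slam^{n-m}g=h$; applying $\slam^{*(n-m)}$ and invoking Proposition \ref{wodaPB}(iii) gives $\|\slam^{n-m}e_u\|^2\,g(u)=0$ for every $u\in V$. Leaflessness forces $\|\slam^{n-m}e_u\|>0$, so $g=0$ and $f=0$.

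For (iii), let $n>m\Ge 0$ and $g,h\in\nul{\slam^*}$. Proposition \ref{wodaPB}(iii) rewrites
\begin{align*}
\is{\slam^n g}{\slam^m h}=\is{\slam^{*m}\slam^m\slam^{n-m}g}{h}=\sum_{u\in V}\|\slam^m e_u\|^2(\slam^{n-m}g)(u)\overline{h(u)}.
\end{align*}
Grouping the sum by the parent $w=\pa u$ and using $(\slam^{n-m}g)(u)=\lambda_u\,\lambda_{\pan{n-m-1}{w}|w}\,g(\pan{n-m-1}{w})$ for $u\in\dzi w$, each outer summand contains the factor $\sum_{u\in\dzi w}\|\slam^m e_u\|^2\lambda_u\overline{h(u)}$. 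Local power balance makes $\|\slam^m e_u\|^2$ constant on $\dzi w$, so it can be pulled out, leaving $\sum_{u\in\dzi w}\lambda_u\overline{h(u)}$, which vanishes by Proposition \ref{wodaPB}(ii).

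For (iv), in view of Lemma \ref{balans} and the standing injectivity, it suffices to verify local power balance, i.e.\ $\|\slam^m e_{u_1}\|=\|\slam^m e_{u_2}\|$ whenever $\pa{u_1}=\pa{u_2}$ and $m\Ge 1$. The idea is to turn the orthogonality from (iii) into a scalar equation by a judicious choice of test vectors. Fix $w_0\in V$ with $|\dzi{w_0}|\Ge 2$ (the remaining case being vacuous) and siblings $u_1,u_2\in\dzi{w_0}$. Put
\begin{align*}
h:=\lambda_{u_2}e_{u_1}-\lambda_{u_1}e_{u_2}\in\nul{\slam^*},\qquad g:=e_{\koo}\in\nul{\slam^*},
\end{align*}
and, for each $m\Ge 1$, take $n:=m+|w_0|+1$. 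Using $\slam^{n-m}e_{\koo}=\sum_{|v|=|w_0|+1}\lambda_{\koo|v}e_v$ and the computation from (iii), the inner product $\is{\slam^n g}{\slam^m h}$ collapses to $\lambda_{u_1}\lambda_{u_2}\lambda_{\koo|w_0}\bigl(\|\slam^m e_{u_1}\|^2-\|\slam^m e_{u_2}\|^2\bigr)$, so the assumed orthogonality forces the desired equality. The main obstacle is precisely this choice of test pair: the sibling-orthogonal $h$ cleanly isolates the single quantity $\|\slam^m e_{u_1}\|^2-\|\slam^m e_{u_2}\|^2$, but producing a $g\in\nul{\slam^*}$ that does not vanish on the ancestors of $w_0$ is delicate (it may fail if $w_0$ is an only child); the remedy is to propagate the always-available root vector $e_{\koo}$ through the power $\slam^{|w_0|+1}$, which reaches $w_0$'s generation without requiring any additional hypothesis on $\tcal$.
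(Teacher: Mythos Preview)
Your argument is correct throughout. Parts (ii)--(iv) run along the paper's own lines: the paper also reduces (ii) via Proposition~\ref{wodaPB}(iii) to $g\in\nul{\slam^*}\cap\obr{\slam}$; it proves (iii) by pulling the sibling-constant $\|\slam^m e_u\|^2$ out of the inner product (the paper first restricts $h$ to a single set $\dzi{w}$ using Corollary~\ref{rest}, which is equivalent to your grouping-by-parent manoeuvre); and for (iv) it uses exactly your test pair $g=e_{\koo}$, $h=\lambda_{u_2}e_{u_1}-\lambda_{u_1}e_{u_2}$ and the same computation.

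The genuine difference is in (i). The paper argues through the generation layers $G_n=\ell^2(\{|v|=n\})$: it shows, via the sibling-block description \eqref{biedronka} of $\nul{\slam^*}$, that $P_{G_n}$ and $P_{\nul{\slam^*}}$ commute, deduces $G_n=(\nul{\slam^*}\cap G_n)\oplus\overline{\obr{\slam}\cap G_n}$ from Lemma~\ref{prosty}, and then establishes by induction on $n$ that $\overline{\obr{\slam}\cap G_n}=\bigvee_{k=1}^{n}\slam^k\big(\nul{\slam^*}\cap G_{n-k}\big)$. Your vertex-by-vertex induction short-circuits this machinery: the single explicit correction $f=e_u-\alpha\,\slam e_{\pa u}$ is nothing but the orthogonal projection of $e_u$ onto $\nul{\slam^*}$, so you recover the spanning statement with fewer moving parts. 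What your route gives up is the finer generation-wise decomposition \eqref{alt4} that the paper obtains along the way, which is of independent interest but not needed for the bare assertion (i).
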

\begin{proof}
(i) Given $n\in\N_0$, we define the subspace $G_n$ of $\ell^2(V)$ by
\begin{align*}
G_n = \ell^2\big(\{|v|=n\}\big).  
\end{align*}
Clearly, the underlying space $\ell^2(V)$ can be decomposed into the orthogonal sum $\bigoplus_{n \in \N_0} G_n$. At the same time we have $\ell^2(V)=\nul{\slam^*}\oplus\overline{\obr{\slam}}$. Thus the decomposition will be deduced by investigating $\nul{\slam^*} \cap G_n$ and $\overline{\obr{\slam}} \cap G_n$ for all $n\in\N_0$.

First, we show that
\begin{align}\label{alt1}
\nul{\slam^*}=\bigoplus_{n \in \N_0} (G_n\cap\nul{\slam^*}).
\end{align} 
Indeed, for every $u,v\in V$ such that $u\neq v$ we have $\dzi{u} \cap \dzi{v} = \varnothing$ and consequently the spaces $\ell^2(\dzi{u})$ and $\ell^2(\dzi{v})$ are orthogonal. Moreover, we have $V = \bigsqcup_{n \in \N_0} \bigsqcup_{|v|=n} \dzi{v}$, where $\bigsqcup$ denotes disjoint union. Hence, by Proposition \ref{wodaPB} (ii), we get
\begin{align}\label{biedronka}
\nul{\slam^*} = G_0 \oplus \bigoplus_{n \in \N_0} \bigoplus_{|u|=n} \Big\{ f \in \ell^2(\dzi{u}) \colon \sum_{v \in \dzi{u}} f(v) \overline{\lambda_v}=0 \Big\}.
\end{align}
This implies \eqref{alt1}.

Now, it follows from \eqref{alt1} that $P_{G_n}P_{\nul{\slam^*}}\ell^2(V)\subseteq P_{\nul{\slam^*}}\ell^2(V)$ for every $n\in\N_0$. 
Hence, by Lemma \ref{prosty} we have
\begin{align}\label{alt2}
P_{\nul{\slam^*}} P_{G_n} = P_{G_n}P_{\nul{\slam^*}},\quad n \in \N_0.
\end{align}

Next, since $\overline{\obr{\slam} \cap G_n}=\big((\obr{\slam} \cap G_n)^\perp\big)^\perp$  and by the continuity of the inner product, $(\obr{\slam}\cap G_n)^\perp=(\overline{\obr{\slam}}\cap G_n)^\perp$  thus
\begin{align}\label{alt3}
\overline{\obr{\slam} \cap G_n} = \overline{\obr{\slam}} \cap G_n\quad \text{for}\quad n \in \N_0.
\end{align}
By Lemma \ref{prosty}, \eqref{alt2}, and \eqref{alt3} we have 
\begin{align}\label{preindy}
G_n=(\nul{\slam^*}\cap G_n)\oplus \overline{\obr{\slam}\cap G_n},\quad n\in\N_0.
\end{align}

Now we prove by the induction that
\begin{align}\label{indy}
\overline{\obr{\slam}\cap G_n}=\bigvee_{k=1}^{n} \slam^{k} \Big(\nul{\slam^*} \cap G_{n-k}\Big),\quad n\in\N.
\end{align} For $n=1$, the equality \eqref{indy} is easily seen, since $\overline{\obr{\slam}\cap G_1}=\C \slam(e_{\koo})$ and $\nul{\slam^*} \cap G_{0}=\C e_{\koo}$. Assume that \eqref{indy} holds for some $n\in\N$. Then, in view of \eqref{preindy}, we have

\begin{align*}
\overline{\obr{\slam}\cap G_{n+1}}=\overline{\slam(G_n)}
&=\overline{\slam\Big((\nul{\slam^*}\cap G_n)\oplus \overline{\obr{\slam}\cap G_n}\Big)}\\
&=\overline{\slam\Big((\nul{\slam^*}\cap G_n)\oplus \bigvee_{k=1}^{n} \slam^{k} \big(\nul{\slam^*} \cap G_{n-k}\big)\Big)}\\
&=\overline{\slam\Big(\bigvee_{k=0}^{n} \slam^{k} \big(\nul{\slam^*} \cap G_{n-k}\big)\Big)}\\
&=\bigvee_{k=1}^{n+1} \slam^{k} \big(\nul{\slam^*} \cap G_{n+1-k}\big),
\end{align*}
which yields \eqref{indy}. Equalities \eqref{preindy} and \eqref{indy} imply that
\begin{align}\label{alt4}
G_n = \Big(\nul{\slam^*} \cap G_n\Big) \oplus \bigvee_{k=1}^{n} \slam^{k} \Big(\nul{\slam^*} \cap G_{n-k}\Big),\quad n \in \N.
\end{align}
The combination of \eqref{alt4} with $\ell^2(V)=\bigoplus_{n \in \N_0} G_n$ proves (i).

(ii) Suppose that $\slam$ is injective. Consider $g, h \in \nul{\slam^*}$ such that $\slam^n g=\slam^m h$ for some $n, m\in\N$ such that $n<m$ (the case $m=0$ or $n=0$ follows immediately from (i)). Then, by Proposition \ref{wodaPB} (iii), we get
\begin{align*}
\|\slam^n e_u\|^2g(u) =(\slam^{*n}\slam^{n}g)(u) = (\slam^{*n}\slam^{n}\slam^{m-n}h)(u)=\|\slam^n e_u\|^2(\slam^{m-n}h)(u),\quad u\in V.
\end{align*}
Thus $g=\slam^{m-n}h$. This implies that $g\in \nul{\slam^*}\cap\obr{\slam}=\{0\}$, which gives (ii).

(iii) Assume that $\slam$ is locally power balanced. Fix $n, m \in \N_0$ and $f, g \in \nul{\slam^*}$. We may assume that $n<m$. In view of \eqref{biedronka} and Corollary \ref{rest}, we may also assume that $f=\chi_{\dzi{w}}f$ for some $w\in V$ or $f= e_{\koo}$. We will consider the first case (the second is essentially the same) for fixed $w \in V$. Let $C:=\|\slam^n e_u\|^2$, where $u\in \chi_{\dzi{w}}$. Since $\slam$ is locally power balanced, the definition of $C$ is independent of the choice of $u\in \chi_{\dzi{w}}$. Then, by Proposition \ref{wodaPB} (iii), we get
\begin{align*}\allowdisplaybreaks
\is{\slam^m g}{\slam^n f}&=\is{\slam^{*n}\slam^m g}{f} 
= \sum_{u \in V} \| \slam^n e_u\|^2 \big( \slam^{m-n} g\big) (u) \overline{f(u)}\\
&= \sum_{u\in\dzi{w}} \| \slam^n e_u\|^2 \big(\slam^{m-n} g\big)(u) \overline{f(u)}= C \sum_{u\in\dzi{w}}  \big(\slam^{m-n} g\big)(u) \overline{f(u)}\\
&= C \ \is{\slam^{m-n} g}{\chi_{\dzi{w}} f}=C \ \is{\slam^{m-n} g}{ f}=0,
\end{align*}
where the last equality follows from $f \in \nul{\slam^*}$.

(iv) Suppose, that $\slam^n \big(\nul{ \slam^{*}} \big) \perp \slam^m \big( \nul{ \slam^*} \big)=\{0\}$ for every $n, m \in \N_0$ such that $n\neq m$. Proposition \ref{wodaPB} (iii) implies that
\begin{align}\label{rooster}
0=\is{\slam^{m+n}e_{\koo}}{\slam^n f}=\sum_{u \in V} \|\slam^n e_{u}\|^2 \big(\slam^m e_{\koo}\big)(u) \overline{f(u)},\quad m, n\in\N,\ f\in\nul{\slam^*}.
\end{align}
Let $u_1, u_2\in V$ be such that $\pa{u_1}=\pa{u_2}$. Let $f=\bar\lambda_{u_2}e_{u_1}-\bar\lambda_{u_1}e_{u_2}$ and let $m=|u_1|=|u_2|$. By \eqref{rooster} we get
\begin{align*}
0&=\|\slam^n e_{u_1}\|^2 \big(\slam^m e_{\koo}\big)(u_1) \lambda_{u_2} -\|\slam^n e_{u_2}\|^2 \big(\slam^m e_{\koo}\big)(u_2) \lambda_{u_1}\\
&= \|\slam^n e_{u_1}\|^2 \lambda_{\koo|u_1} \lambda_{u_2} -\|\slam^n e_{u_2}\|^2 \lambda_{\koo|u_2} \lambda_{u_1},\quad n\in \N,
\end{align*}
which yields $\|\slam^n e_{u_1}\|=\|\slam^n e_{u_2}\|$ for every $n\in\N$. This means that $\slam$ is locally power balanced. According to Lemma \ref{balans}, $\slam$ is balanced.
\end{proof}
Using \cite[Proposition 8.1.7]{j-j-s-2012-mams} we get the following.
\begin{cor}
Assume that \eqref{stand1} holds and $\slam\in\bsb(\ell^2(V))$. If $\slam$ is injective and quasinormal, then $\slam^n \big(\nul{ \slam^{*}} \big) \perp \slam^m \big( \nul{ \slam^*} \big)$ for every $n, m \in \N_0$ such that $n\neq m$.
\end{cor}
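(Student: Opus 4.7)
The plan is to reduce the corollary directly to Theorem \ref{studenciPB}\,(iii), which gives the desired orthogonality whenever $\slam$ is locally power balanced. So the real content of the proof is to show that an injective quasinormal weighted shift on a rooted directed tree is locally power balanced.

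First, I would invoke \cite[Proposition 8.1.7]{j-j-s-2012-mams} as cited just before the corollary. The expected form of that result, in the present setting of positive weights, is a characterization of quasinormality saying that $\|\slam e_u\|^2 = \sum_{w \in \dzi{u}} |\lambda_w|^2$ depends only on $|u|$. In other words, quasinormality of $\slam$ is equivalent to $\|\slam e_u\| = \|\slam e_v\|$ whenever $|u| = |v|$, which is precisely condition~(i) of Lemma \ref{balans}, i.e., $\slam$ being balanced.

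Next, since $\slam$ is injective by hypothesis, Lemma \ref{balans} allows me to pass from the balanced condition~(i) to the locally power balanced condition~(iii) (going through~(ii) as the equivalent middle step). At this point all the hypotheses of Theorem \ref{studenciPB}\,(iii) are in force, and a direct application produces
\begin{equation*}
\slam^n\big(\nul{\slam^*}\big) \perp \slam^m\big(\nul{\slam^*}\big), \quad n,m\in\N_0,\ n\neq m,
\end{equation*}
which is the claim.

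There is no serious obstacle here beyond correctly quoting \cite[Proposition 8.1.7]{j-j-s-2012-mams}; the corollary is essentially a packaging result that chains together the quasinormality $\Longrightarrow$ balanced implication from that proposition, the balanced $\Longleftrightarrow$ locally power balanced equivalence (under injectivity) from Lemma \ref{balans}, and the locally power balanced $\Longrightarrow$ orthogonal Wold decomposition implication from Theorem \ref{studenciPB}\,(iii).
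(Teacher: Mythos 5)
Your argument is correct and is exactly the chain the paper intends: quasinormality with positive weights forces, via \cite[Proposition 8.1.7]{j-j-s-2012-mams}, the quantity $\|\slam e_u\|$ to be constant (in fact over all of $V$, which is stronger than your reading but in particular gives condition (i) of Lemma \ref{balans}), and then Lemma \ref{balans} together with Theorem \ref{studenciPB}(iii) yields the orthogonality. The paper supplies no further detail, so the only remark is that Proposition 8.1.7 is not an equivalence with balancedness as you guessed, but the implication you actually use (quasinormal $\Rightarrow$ balanced) is the correct one.
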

Regarding Theorem \ref{studenciPB}(ii), the following two elementary examples (see Examples \ref{miotla} and \ref{sscx} below) show that in the non-injective case relations between spaces $\slam^k\big(\nul{\slam^*}\big)$, $k\in\N$, are more complicated. The first one proves also that injectivity of the weighted shift $\slam$ is not necessary to get the condition $\slam^k \big(\nul{\slam^*}\big)\cap \slam^m \big(\nul{\slam^*}\big)=\{0\}$ for all $k,m\in\N$ such that $k\neq m$.
\begin{figure}[ht]
\begin{tikzpicture}[scale=0.8, transform shape,edge from parent/.style={draw,-latex}]
\node[circle,fill=gray!30] {$0$}
child {node[circle,fill=gray!30] {$1$} edge from parent node[left,font=\scriptsize]{$\lambda_1 \ \, $}}
child {node[circle,fill=gray!30] {$2$}edge from parent node[left,font=\scriptsize]{$\lambda_2  $}} 
child {node[circle,fill=gray!30] {$3$}edge from parent node[left,font=\scriptsize]{$\lambda_3  $}} 
child {node[circle,fill=gray!30] {$4$}edge from parent node[left,font=\scriptsize]{$\lambda_4  $}} 
child {node[fill = none] {$\ldots \ldots$} edge from parent[->, dashed]  };
\end{tikzpicture}
\caption{\label{miotla-fig}}
\label{lisciastyShift}
\end{figure}

\begin{exa}\label{miotla}
Let $\tcal=(V,E)$ be the directed tree given by (see Figure \ref{miotla-fig})
\begin{align*}
V=\N_0,\quad E=\{(0,n)\colon n\in \N\}.
\end{align*}
Let $\lambdab=\{\lambda_{v}\}_{v\in V^\circ}=\{\lambda_{n}\}_{n=1}^\infty$ be non-zero complex numbers such that $\sum_{n=1}^\infty|\lambda_{n}|^2<\infty$. Then $\slam$, the weighted shift on $\tcal$ with weights $\lambdab$ is bounded on $\ell^2(V)$ (see \cite[Proposition 3.1.8]{j-j-s-2012-mams}). Moreover, since $\tcal$ has leafs, $\slam$ is not injective (see \cite[Proposition 3.1.7]{j-j-s-2012-mams}). In view of Proposition \ref{wodaPB} (ii), we have
\begin{align*}
\nul{\slam^*}=\big\{f\in\ell^2(\N_0)\colon  \sum_{n=1}^\infty \bar\lambda_{n}f(n)=0\big\}.
\end{align*}
Then $\slam \big(\nul{\slam^*}\big) = \C \slam e_0$ and $\slam^k \big(\nul{\slam^*}\big) =\{0\}$ for $k\in\N$ such that $k>1$. In particular, we have $\slam^k \big(\nul{\slam^*}\big)\cap \slam^m \big(\nul{\slam^*}\big)=\{0\}$ for all $k,m\in\N$ such that $k\neq m$.
\end{exa}
\begin{figure}[ht] 
\begin{tikzpicture}[scale=0.8, transform shape,edge from parent/.style={draw,-latex}]
\node[circle,fill=gray!30] {$0$}
child {node[circle,fill=gray!30] {$1$}child {node[circle,fill=gray!30]{$\omega$} edge from parent node[left,font=\scriptsize]{$\lambda_\omega  $}} edge from parent node[left,font=\scriptsize]{$\lambda_1 \ \, $}  }
child {node[circle,fill=gray!30] {$2$}edge from parent node[left,font=\scriptsize]{$\lambda_2  $}} 
child {node[circle,fill=gray!30] {$3$}edge from parent node[left,font=\scriptsize]{$\lambda_3  $}} 
child {node[circle,fill=gray!30] {$4$}edge from parent node[left,font=\scriptsize]{$\lambda_4  $}} 
child {node[fill = none] {$\ldots \ldots$} edge from parent[->, dashed]  };
\end{tikzpicture}
\caption{\label{sscx-fig}}
\label{dwulisciastyShift}
\end{figure}
\begin{exa}\label{sscx}
Let $\tcal=(V,E)$ be the directed tree given by (see Figure \ref{sscx-fig})
\begin{align*}
V=\N_0\cup\{\omega\},\quad E=\{(0,n)\colon n\in \N\}\cup\{(1,\omega)\},
\end{align*}
with $\omega\notin\N_0$. Let $\lambdab=\{\lambda_{v}\}_{v\in V^\circ}=\{\lambda_{n}\}_{n=1}^\infty\cup\{\lambda_{\omega}\}$ be non-zero complex numbers such that $\sum_{n=1}^\infty|\lambda_{n}|^2<\infty$. Clearly, $\slam$, the weighted shift on $\tcal$ with weights $\lambdab$ is bounded on $\ell^2(V)$ and non-injective. In view of Proposition \ref{wodaPB} (ii) we have
\begin{align*}
\nul{\slam^*}=\big\{f\in\ell^2(V)\colon \sum_{n=1}^\infty \bar\lambda_{n}f(n)=f(\omega)=0\big\}.
\end{align*}
Easy calculations show that $\slam \big(\nul{\slam^*}\big) = \bigvee \{ \slam e_0, e_{\omega}\}$ and $\slam^2 \big(\nul{\slam^*}\big) =\C e_{\omega}$, which means that we have $\slam \big(\nul{\slam^*}\big)\cap \slam^2 \big(\nul{\slam^*}\big)\neq\{0\}$. On the other hand, $\slam^k \big(\nul{\slam^*}\big) =\{0\}$ for every $k\in\N$ such that $k>2$.
\end{exa}
In case of a general injective weighted shift $\slam$ on a directed tree, the subspace  $\bigvee_{n=1}^{\infty} \slam^n( \nul{\slam^*})$ hardly decomposes into the orthogonal sum of the factors. In fact, as shown in the example below, representing $f\in\ell^2(V)$ as a $\sum_{n=1}^\infty \slam^n f_n$, $\{f_n\}_{n=1}^\infty\subseteq \nul{\slam^*}$, may not even be possible.
\begin{figure}[ht]
\begin{center}
\begin{tikzpicture}[scale=0.8, transform shape,edge from parent/.style={draw,to}]
\tikzstyle{every node} = [circle,fill=gray!30]
\node (e10)[font=\footnotesize, inner sep = 1pt] at (0,0) {$(0,0)$};

\node (e11)[font=\footnotesize, inner sep = 1pt] at (3,1) {$(1,1)$};
\node (e12)[font=\footnotesize, inner sep = 1pt] at (6,1) {$(1,2)$};
\node (e13)[font=\footnotesize, inner sep = 1pt] at (9,1) {$(1,3)$};
\node[fill = none] (e1n) at(12,1) {};

\node (f11)[font=\footnotesize, inner sep = 1pt] at (3,-1) {$(2,1)$};
\node (f12)[font=\footnotesize, inner sep = 1pt] at (6,-1) {$(2,2)$};
\node (f13)[font=\footnotesize, inner sep = 1pt] at (9,-1) {$(2,3)$};
\node[fill = none] (f1n) at(12,-1) {};

\draw[->=stealth] (e10) --(e11) node[pos=0.5,above = 0pt,fill=none] {$1$};
\draw[->] (e11) --(e12) node[pos=0.5,above = 0pt,fill=none] {$1$};
\draw[->] (e12) --(e13) node[pos=0.5,above = 0pt,fill=none] {$1$};
\draw[dashed, ->] (e13)--(e1n);
\draw[->] (e10) --(f11) node[pos=0.5,below = 0pt,fill=none] {$\alpha$};
\draw[->] (f11) --(f12) node[pos=0.5,below = 0pt,fill=none] {$\alpha$};
\draw[->] (f12) --(f13) node[pos=0.5,below = 0pt,fill=none] {$\alpha$};
\draw[dashed, ->] (f13)--(f1n);
\end{tikzpicture}
\end{center}
\caption{\label{truffaz-fig}}
\label{drzewot201}
\end{figure}
\begin{exa}\label{truffaz}
Let $\tcal_{2} = (V_{2},{E_{2}})$ be the directed tree given by (see Figure \ref{truffaz-fig})\allowdisplaybreaks
\begin{align*}
    V_{2} &= \big\{(0,0) \big\} \cup \big\{ (i,j) \colon  i\in\{1,2\},\ j \in \N \big\}, \\
    E_{2} &= \Big\{ \big((0,0),(i,1)\big) \colon i\in\{1,2\} \Big\}\cup \Big\{ \big((i,j),(i,j+1)\big) \colon i\in\{1,2\},\ j \in \N \Big\}.
\end{align*}
(This directed tree was denoted in \cite{j-j-s-2012-mams} as $\tcal_{2,0}$.) Let $\alpha \in (0,1)$. Let $\slam$ be a weighted shift on $\tcal_{2}$ with weights $\lambdab = \{ \lambda_v\}_{v \in V_{2}}$ defined as follows
\begin{align*}
\lambda_{(i,j)} 
= \left\{ 
\begin{array}{cl} 
1 & \text{ for } i=1 \text{ and } j\in\N, \\ 
\alpha & \text{ for } i=2 \text{ and } j\in\N.
\end{array} 
\right.
\end{align*}
Then $\slam \in \bsb(\ell^2(V))$ by \cite[Proposition 3.1.8]{j-j-s-2012-mams}. In view of Proposition \ref{wodaPB} (ii), we have
\begin{align*}
\nul{\slam^*} = \bigvee\{ e_{00},\alpha e_{11} - e_{21}\}.    
\end{align*}
Note also that
\begin{align}\label{dumb}
\slam^k(e_{00}) = e_{1k} + \alpha^k e_{2k}, \quad \slam^k(\alpha e_{11} - e_{21}) = \alpha e_{1,k+1} - \alpha^k e_{2,k+1},\quad k \in \N.
\end{align}
Now, let $f\colon V_2\to\C$ be given by
\begin{align*}
f((i,j))=
\left\{ 
\begin{array}{cl} 
0 & \text{ for }  i\in\{0,1\} , \\ 
\frac{1}{j} & \text{ for } i=2 \text{ and } j\in\N.
\end{array} 
\right.
\end{align*}
Clearly, $f\in \ell^2(V)$. Suppose that $f$ can be be decompose as $f = \sum_{k=0}^\infty (\beta_k \slam^k e_{00} + \gamma_k \slam^k(\alpha e_{11} - e_{21}))$ for some $\{\beta_k\}_{k \in \N_0}$ and $\{\gamma_k\}_{k \in \N_0}\subseteq \C$. If so, then $f=\lim_{n\to\infty} f_n$, where $f_n=\sum_{k=0}^n \beta_k \slam^k e_{00} + \gamma_k \slam^k(\alpha e_{11} - e_{21})$, $n\in\N$. Employing \eqref{dumb} we get 
\begin{align*}
f_n= \beta_0e_{00} + \gamma_0(\alpha e_{11} - e_{21}) + \sum_{k=1}^n \beta_k (e_{1k} + \alpha^k e_{2k}) + \gamma_k(\alpha e_{1,k+1} - \alpha^k e_{2,k+1}), \quad n\in\N.
\end{align*}
This means that for every $n\in\N$ we have
\begin{align*}
f_n\big((0,0)\big)&= \beta_0,\\
f_n\big((1,j)\big)&= \beta_j + \gamma_{j-1} \alpha, \quad j = 1, \ldots ,n,\\
f_n\big((2,j)\big)&= \beta_j \alpha^j- \gamma_{j-1} \alpha^{j-1}, \quad j = 1, \ldots ,n,\\
f_n\big((1,n+1)\big)&= \gamma_{n} \alpha,\\
f_n\big((2,n+1)\big)&=- \gamma_{n} \alpha^{n}.
\end{align*}
Since $\lim_{n\to\infty}f_n=f$ implies that $\lim_{n\to\infty}f_n(v)=f(v)$ for every $v\in V_2$, we deduce that
\begin{align*}
\gamma_{j} = -\frac{1}{(j+1)\alpha^{j}(1+\alpha^2)},\quad j\in\N_0.
\end{align*}
Therefore, we have
\begin{align*}
\|f-f_n\|^2&\geqslant \Big|f\big((1,n+1)\big)-f_n\big((1,n+1\big))\Big|^2\\
&=\Big|f_n\big((1,n+1\big))\Big|^2
=\frac{1}{(n+1)\alpha^n(1+\alpha^2)},\quad n\in\N.
\end{align*}
Clearly, this contradicts the fact that $\lim_{n\to\infty}\|f-f_n\|=0$.
\end{exa}
\begin{rem}
We note that the Example \ref{truffaz} can be modified so as to show that in fact for every directed tree $\tcal$ which has at least two infinite paths, there exists a weighted shift $\slam$ on $\tcal$ such that the representation $f=\sum_{n=1}^\infty \slam^n f_n$, $\{f_n\}_{n=1}^\infty\subseteq \nul{\slam^*}$ is not possible for some $f$ in the underlying $\ell^2$-space.
\end{rem}
That a locally power balanced weighted shift may not be balanced if injectivity is not assummed (cf. Corollary \ref{balans} and Theorem \ref{studenciPB}(iii)\&(iv)) is shown in the following elementary example.
\begin{exa}
Let $\tcal=\tcal_2$ (see Example \ref{truffaz}). Let $\lambdab=\{\lambda_v\}_{v\in V^\circ}\subseteq\C$ be given by
\begin{align*}
\lambda_{(i,2)}&=0,\quad i\in \{1,2\},\\
\lambda_{(1,j)}&=1,\quad j\in \N\setminus\{2\},\\
\lambda_{(2,j)}&=2,\quad j\in \N\setminus\{2\}.
\end{align*}
Then $\slam \in \bsb(\ell^2(V))$ by \cite[Proposition 3.1.8]{j-j-s-2012-mams}. The only vertexes $u_1, u_2\in V$ such that $u_1\neq u_2$ and $\pa{u_1}=\pa{u_2}$ are $u_1=(1,1)$ and $u_2=(2,1)$. Since $\slam e_{u_1}=\slam e_{u_2}=0$, we get $\|\slam^n e_{u_1}\|=\|\slam^n e_{u_2}\|=0$ for every $n\in \N$. Hence, $\slam$ is locally balanced. Clearly $\slam$ is not balanced.
\end{exa}
The uniqueness of the representation emerges if boundedness from below is assumed.
\begin{prop}
Let $\slam\in\bsb(\ell^2(V))$ be a weighted shift on a directed tree $\tcal = (V,E)$ with weights $\lambdab = \{\lambda_v\}_{v \in V^\circ}$. Assume that $\slam$ is bounded from below. Then for every $f \in \ell^2(V)$, if $f$ has a representation $f = \sum_{n=0}^{\infty} \slam^n f_n$ with $\{f_n\}_{n=0}^\infty \in \nul{\slam^*}$, it is unique.
\end{prop}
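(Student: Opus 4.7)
The plan is to suppose $f$ has two such representations, subtract them, and show the resulting null series forces every coefficient to vanish. Concretely, if $f = \sum_{n=0}^{\infty} \slam^n f_n = \sum_{n=0}^{\infty} \slam^n g_n$ with $f_n, g_n \in \nul{\slam^*}$, set $h_n = f_n - g_n \in \nul{\slam^*}$, so that $\sum_{n=0}^{\infty} \slam^n h_n = 0$. The goal is then to show $h_n = 0$ for every $n \in \N_0$.

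The key leverage is the bounded-below hypothesis. Since $\slam$ is bounded from below, there exists $c > 0$ with $\|\slam g\| \geqslant c \|g\|$ for every $g \in \ell^2(V)$. In particular, $\slam$ is injective and $\obr{\slam}$ is closed, so $\ell^2(V) = \nul{\slam^*} \oplus \obr{\slam}$ is an orthogonal decomposition. The idea is to isolate the ``constant term'' $h_0$ by writing the partial sums as
\begin{align*}
s_N := \sum_{n=0}^{N} \slam^n h_n = h_0 + \slam t_N, \qquad \text{where } t_N := \sum_{n=1}^{N} \slam^{n-1} h_n.
\end{align*}
By assumption $s_N \to 0$ in $\ell^2(V)$, hence $\slam t_N \to -h_0$. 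The bounded-below estimate yields $\|t_N - t_M\| \leqslant c^{-1} \|\slam(t_N - t_M)\|$, so $\{t_N\}$ is Cauchy; let $t$ denote its limit. Continuity of $\slam$ then gives $\slam t = -h_0$, so $h_0 \in \obr{\slam}$. Since $h_0 \in \nul{\slam^*} = \obr{\slam}^\perp$, we conclude $h_0 = 0$.

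With $h_0 = 0$ in hand, $\slam t_N \to 0$, and bounded-belowness gives $t_N \to 0$, i.e., $\sum_{n=1}^{\infty} \slam^{n-1} h_n = 0$. This is a series of exactly the same form, with the role of $h_n$ now played by $h_{n+1}$. Proceeding by induction on $k$, at step $k$ one obtains $\sum_{n=0}^{\infty} \slam^n h_{n+k} = 0$, from which the argument above yields $h_k = 0$. Hence $h_n = 0$ for all $n \in \N_0$, proving uniqueness.

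The only genuinely non-trivial step is verifying convergence of the ``shifted'' series $\sum_{n=1}^{\infty} \slam^{n-1} h_n$ from convergence of $\sum_{n=0}^{\infty} \slam^n h_n$; this is precisely where the bounded-below hypothesis is indispensable, since without it the Cauchy estimate for $\{t_N\}$ fails and one cannot pass the limit back through $\slam$ to conclude $h_0 \in \obr{\slam}$.
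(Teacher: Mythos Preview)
Your proof is correct. The overall architecture—isolate the $n=0$ term, use bounded-belowness to pass to the shifted series, then induct—coincides with the paper's, and the induction step is essentially identical (the paper phrases it via left-invertibility of $\slam$). The genuine difference lies in how $h_0=0$ is established. You argue purely operator-theoretically: the bounded-below Cauchy estimate forces $t_N\to t$, whence $-h_0=\slam t\in\obr{\slam}$, and since $h_0\in\nul{\slam^*}=\obr{\slam}^\perp$ you conclude $h_0=0$. The paper instead applies $\slam^*$ to the series and exploits the pointwise identity $(\slam^*\slam g)(u)=\|\slam e_u\|^2\,g(u)$, specific to weighted shifts on directed trees, together with $\|\slam e_u\|\neq 0$, to obtain $\sum_{n\geqslant 1}\slam^n g_n=0$ directly and hence $g_0=0$. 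Your route is more general—it uses no tree structure whatsoever and goes through verbatim for any bounded-from-below operator on a Hilbert space—while the paper's argument stays close to the concrete combinatorial model.
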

\begin{proof}
We first prove that 
\begin{align}\label{basy}
\text{for every $\{g_n\}_{n=0}^\infty\subseteq\nul{\slam^*}$, the equality $\sum_{n=0}^{\infty} \slam^n g_n =0$ implies $g_0=0$.}
\end{align}
Indeed, by Proposition \ref{wodaPB}, we then have
\begin{align}\label{incubus}
0=\slam^*\bigg(\sum_{n=0}^{\infty} \slam^n g_n\bigg)(u)
=\sum_{n=1}^{\infty} (\slam^* \slam^n g_n)(u)
=\sum_{n=1}^{\infty} \|\slam e_u\|^2 (S^{n-1}g_n)(u),\quad  u \in V.
\end{align}
Since $\slam$ is bounded from below, $\|\slam e_u\|\neq 0$ for every $u\in V$, hence \eqref{incubus} yields 
\begin{align*}
\sum_{n=1}^{\infty} \slam^{n}g_n= \slam\bigg(\sum_{n=1}^{\infty} S^{n-1}g_n\bigg)=0.
\end{align*}
Combining this with $\sum_{n=0}^{\infty} \slam^n g_n =0$ gives $g_0=0$.

Now, let $\{f_n\}_{n=0}^\infty\subseteq\nul{\slam^*}$ be such that $\sum_{n=0}^{\infty} \slam^n f_n =0$. Since $\slam$, being bounded from below, is left invertible, we deduce from \eqref{basy} that $\sum_{n=1}^{\infty} \slam^{n-1} f_n =0$. This, by \eqref{basy}, implies that $f_1=0$. Using induction argument we get $f_n=0$ for every $n\in\N_0$.
\end{proof}

\section*{Acknowledgments}
The second, third, and fourth authors were supported by the Ministry of Science and Higher Education of the Republic of Poland.
\bibliographystyle{amsalpha}

\end{document}